\documentclass{amsart}
\usepackage[utf8]{inputenc}
\usepackage{amsmath,amssymb,amsthm,mathtools,bm}
\usepackage{dsfont}
\usepackage{graphicx}
\usepackage{microtype}
\usepackage[hidelinks]{hyperref}

\title{Derivation of the Enskog--Enskog System from a Particle System}
\author{Zhe Chen}
\address{Laboratoire MAP5, Université Paris Cité, 45 rue des Saints-Pères, 75270 Paris Cedex 06, France}
\email{zhe.chen@u-pariscite.fr}
\newcommand{\R}{\mathbb{R}}

\newcommand{\sph}{\mathbb{S}}
\newcommand{\dd}{\mathrm{d}}
\newcommand{\dt}{\frac{\mathrm{d}}{\mathrm{d}t}}
\newcommand{\p}{\partial}
\newcommand{\onm}{\Omega^{({N_p},{N_g})}}
\newcommand{\gnm}{P^{({N_p},{N_g})}}
\renewcommand{\div}{\operatorname{div}}

\newtheorem{proposition}{Proposition}[section]
\newtheorem{theorem}{Theorem}[section]
\begin{document}
\subjclass[2020]{35Q20, 82C40, 82C22, 76P05, 76T15}
\keywords{Kinetic theory of gases; Dense gases; Gas--particle mixtures; Enskog equation;
BBGKY hierarchy; Local conservation laws; Boltzmann's H theorem}
\maketitle

\begin{abstract}
We formally derive a coupled Enskog system for a gas--particle mixture from the hard-sphere BBGKY hierarchy.
The derivation retains the finite separation between colliding centres, includes collisions within and between species and explicitly states the direction of spatial shifts and the mass-dependent prefactors of the four collision operators.
The resulting system provides a kinetic starting point for two-phase flows for which the volume effect cannot be ignored.
For unit contact factors, we derive the corresponding gas and particle balance laws, and establish an entropy inequality under suitable regularity assumptions.
\end{abstract}

\section*{Introduction}

Multicomponent gas systems play a central role in various physical and engineering applications, ranging from astrophysics to industrial processes involving particulate suspensions and granular materials. Many models have been proposed to describe such systems. For binary mixtures, the most common modelling approaches include:

\begin{itemize}
\item A fluid equation coupled with another fluid equation, such as the Euler, Navier--Stokes, or Stokes equations;
\item A fluid equation coupled with a kinetic equation, such as the Vlasov or Boltzmann equation;
\item Two coupled kinetic equations.
\end{itemize}

This paper focuses on the third type of model, namely, two coupled kinetic equations. Specifically, if we denote by $f \equiv f(t,x,w)\ge 0$ and $F \equiv F(t,x,v)\ge 0$ the velocity distribution functions of masses of gas molecules and particles, respectively, at time $t$, position $x \in \mathbb{R}^3$, and velocities $w, v \in \mathbb{R}^3$, then $f$ and $F$ satisfy the following partial differential equations:
\[\left\{
\begin{aligned}
    &\p_t f+w\cdot\nabla_x f=\mathcal{R}[F,f]+\mathcal{C}[F];\\
    &\p_t F+v\cdot\nabla_x F=\mathcal{D}[f,F]+\mathcal{P}[f],
\end{aligned}\right.
\] 
where $\mathcal{R},\mathcal{C},\mathcal{D}$ and $\mathcal{P}$ stand for collision operators. For dilute binary gas mixtures, the classical Boltzmann equation offers a well-established framework for capturing the dynamics of particle interactions. 
For spatially homogeneous monatomic mixtures, \cite{GP2020} established global existence and uniqueness under suitable assumptions on the initial moments. 
For spatially inhomogeneous mixtures, \cite{HNY2007} constructed the unique global mild solution for sufficiently small, exponentially decaying initial data and a class of inverse-power interactions.
Near global equilibrium, \cite{DJMZ2016} proved spectral-gap estimates.
\cite{BD2016} subsequently established global existence, uniqueness, and exponential convergence to equilibrium for the nonlinear system. 
Complementary higher-order estimates were developed by \cite{BGPS2024}, whose micro--macro decomposition yields closed energy estimates for small perturbations of hard-sphere mixtures in one spatial dimension.
\cite{AMP2022} rigorously derived a binary Boltzmann system from hard-sphere dynamics in a mixture Boltzmann--Grad scaling.

However, as the system becomes denser, particularly with an increasing concentration of one component, the assumptions underlying the Boltzmann equation become inadequate. 
It is essential to retain the delocalization effect, which reflects the spatial correlation between particles during collisions. 
In regimes where interparticle correlations due to finite density are significant, the Enskog equation provides a more accurate description. Originally developed by D. Enskog \cite{Enskog1922} as an extension of the Boltzmann equation for dense gases, the Enskog framework accounts for the increased frequency of collisions and spatial correlations between particles. To model a multicomponent gas--particle system in such dense regimes, we employ a set of coupled Enskog equations that capture the interactions between gas molecules and dispersed particles more faithfully than dilute-limit approximations.
An important foundation is the modified Enskog theory of \cite{vBErnst2}, who developed the mixture transport description.
For the Enskog theory of multicomponent mixtures \cite{LCK1983}, its extension to polydisperse granular mixtures \cite{GDH2007}, and the H-theorem for the modified Enskog equation \cite{Resibois1978}.
Since the coupled Boltzmann equations are already hard to study, there is little literature on the rigorous theory of coupled Enskog equations. An introduction of this model can be found in Chapter 16 of \cite{Chapman1970}.

For the single-species Enskog equation,
\cite{AC1990} established global $L^1$ existence in a box together with convergence towards Boltzmann solutions as the particle diameter tends to zero. 
\cite{Lachowicz1998} justified an Enskog--Euler limit when the Knudsen number and particle-diameter scale have the same order.
\cite{EP1991} provided the existence of solutions to the revised Enskog equation.
For a spatially regularized Boltzmann--Enskog model, \cite{FRS2022} obtained Wasserstein stability estimates yielding uniqueness and continuous dependence for measure-valued solutions.

This paper is devoted to the formulation and properties of a coupled kinetic model based on the Enskog equation for a binary mixture. Our goal is to establish a framework for describing the dynamics of dense gas--particle systems, with particular attention to the effects of composition and particle interactions on evolution. 
Our derivation is grounded in the BBGKY (Bogoliubov--Born--Green--Kirkwood--Yvon) hierarchy, a fundamental tool in nonequilibrium statistical mechanics for describing the evolution of marginal distributions in a many-particle system. 
The coupled system derived here is the kinetic starting point of the formal derivation of the thick spray model in \cite{Chen2025}.

In addition to deriving the coupled kinetic equations, we also establish the macroscopic conservation laws for unit contact factors, governing mass, momentum, and energy for both species. 
These equations form the basis for continuum descriptions of multicomponent flows and are critical for applications in fluid dynamics and thermodynamics. 
Furthermore, we introduce an appropriate entropy functional for the coupled Enskog system and prove an H-theorem, demonstrating that the entropy is non-increasing in time. 
This provides a fundamental link between the microscopic reversibility of Newtonian dynamics and the macroscopic irreversibility observed in thermodynamic systems.

The outline of this paper is as follows:  
In Section \ref{sec:the classical model}, we review the classical derivation of the Boltzmann equation from a microscopic perspective, derive the conservation laws of mass, momentum, and energy for each species, and establish the entropy inequality for the coupled Boltzmann system.
Our main results are presented in Section \ref{sec:main result}. In particular, the target system is introduced in Section \ref{sec:the coupled Enskog equations}, followed by the derivation of the corresponding conservation laws in Section \ref{sec:Conservation laws for Enskog-Enskog system}, and the entropy inequality for the coupled Enskog system is established in Section \ref{sec:Entropy for Enskog-Enskog system}.
Section \ref{sec:derive coupled kinetic equations} is devoted to the formal derivation of the coupled Enskog equations from Newtonian mechanics.


\section{The classical model}\label{sec:the classical model}
\subsection{Derivation of the Boltzmann equation}

Consider a system governed by Newtonian mechanics, consisting of $N$ identical particles of a single species. Let the position and velocity of the $i$-th particle at time $t$ be denoted by $X_i(t), V_i(t) \in \mathbb{R}^3$, respectively, for $1 \le i \le N$. Each particle is assumed to be a rigid sphere of fixed radius $r > 0$. The time evolution of the system is determined by Hamilton's equations:

\begin{equation}\label{Hamilton equations single species}
\left\{
\begin{aligned}
&\frac{\mathrm{d} X_i}{\mathrm{d} t} = \frac{\partial \mathcal{H}}{\partial V_i}, \\
&\frac{\mathrm{d} V_i}{\mathrm{d} t} = -\frac{\partial \mathcal{H}}{\partial X_i}, 
\end{aligned}
\right.
\qquad \text{for } 1 \le i \le N, \text{in  } \Omega,
\end{equation}
where the Hamiltonian $\mathcal{H}$ is given by
\[
\mathcal{H}(t)=\sum_{i=1}^{N}\frac{m|V_i(t)|^2}{2},
\]
and the admissible phase space domain $\Omega$ is defined as
\[
\Omega := \left\{ (X_1,\dots,X_N) \in \mathbb{R}^{6N} : |X_i - X_j| \ge 2r \right\}.
\]
On the boundary, we impose the specular reflection condition, whose explicit formulation will be provided in Section \ref{sec:derive coupled kinetic equations}.

Grad \cite{Grad1949} derived the Boltzmann equation for the velocity distribution function $f(t,x,w)$ using the BBGKY hierarchy under the Boltzmann--Grad limit
\[
\p_t f(t,x,w)+w\cdot \nabla_x f(t,x,w)=\mathcal{B}[f](t,x,w), 
\]
where the Boltzmann collision integral is defined by
\begin{equation}\label{def:boltzmann collision integral for single species}
\mathcal{B}[f](t,x,w):=\int_{\R^3\times\sph^2}(f(t,x,w^o)f(t,x,w_1^o)-f(t,x,w)f(t,x,w_1))b(w-w_1,n)\dd  w_1\dd  n.
\end{equation}
Here $ b(z,\xi)\equiv\tilde{b}(|z|,|z\cdot \xi|)\ge 0$ denotes the collision kernel, and the pre-collision velocities $(w^o,w_1^o)$ are given by
\begin{equation}\label{collision formula between same species}\left\{
\begin{aligned}
    &w^o(w,w_1,n)=w-(w-w_1)\cdot n n\\
    &w_1^o(w,w_1,n)=w_1+(w-w_1)\cdot n n.
\end{aligned}\right.
\end{equation}


Furthermore, for a system consisting of particles with mass $m_p$ and gas molecules with mass $m_g$, let $f \equiv f(t,x,w)$ and $F \equiv F(t,x,v)$ denote the respective distribution functions of gas molecules and particles.
Using a similar technique, we can obtain the following coupled Boltzmann equations for a binary mixture:
\begin{equation}\label{coupled boltzmann equations}\left\{
    \begin{aligned}
        \p_tf+w\cdot\nabla_x f=\mathcal{B}[f]+\mathcal{B}_1[f,F]\\
        \p_tF+v\cdot\nabla_x F=\mathcal{B}_2[F,f]+\mathcal{B}[F],
    \end{aligned}\right.
\end{equation}
where the Boltzmann collision integrals $\mathcal{B}_1$ and $\mathcal{B}_2$ are defined by
\[
\begin{aligned}
&\mathcal{B}_1[f,F](w)=\int_{\R^3\times\sph^2}(f(w')F(v')-f(w)F(v))b(v-w,n)\dd  v\dd  n\\
&\mathcal{B}_2[F,f](v)=\int_{\R^3\times\sph^2}(f(w')F(v')-f(w)F(v))b(v-w,n)\dd  w\dd  n,
\end{aligned}
\]
with the pre-collision velocities given by
\begin{equation}\label{collision formula between different species}\left\{
\begin{aligned}
    &w'(v,w,n,\eta)=w+\frac{2}{1+\eta}(v-w)\cdot n n\\
    &v'(v,w,n,\eta)=v-\frac{2\eta}{1+\eta}(v-w)\cdot n n,
\end{aligned}\right.
\end{equation}
where $\eta:=m_g/m_p\le 1$ represents the mass ratio between different species.
Here $\mathcal{B}$ is the classical Boltzmann collision integral defined in \eqref{def:boltzmann collision integral for single species}.
For a detailed treatment of the two-species case in the context of coupled Boltzmann equations, we refer the reader to Chapter 8 of \cite{Chapman1970}; for a rigorous treatment, see \cite{AMP2022}.
We conclude this subsection by stating several properties of the pre-collision velocities given by \eqref{collision formula between same species} and \eqref{collision formula between different species}, whose verification is straightforward:
\begin{itemize}
    \item The pre-collision velocities are even functions with respect to $n$
    \begin{equation}\label{change the sign of n}
    \begin{aligned}
      v' (v, w, n) = v' (v, w, - n) ; \quad w' (v, w, n) = w' (v, w, - n)\\
      v^o (v, v_*, n) = v^o (v, v_*, - n) ;\quad v_*^o (v, v_*, n) = v_*^o (v, v_*, - n)
    \end{aligned}
    \end{equation}
    \item The collision process is reversible
    \begin{equation}\label{revisible collision}
    \begin{aligned}
         v' (v', w', n) = v ;\quad  w' (v', w', n) = w\\
         v^o (v^o, v_*^o, n) = v ;\quad v_*^o (v^o, v_*^o, n) = v_*
    \end{aligned}
    \end{equation}
    \item The result above shows that the absolute value of the determinant of the Jacobian of the change of variables is $1$
    \begin{equation}\label{jacobian of pre-to-post transformation}
    \left|\mathrm{det} \frac{\partial (v', w')}{\partial (v, w)} \right|= \left|\mathrm{det} \frac{\partial (v^o, v_*^o)}{\partial (v, v_*)} \right|= 1
    \end{equation}
    \item The collision is a specular reflection with respect to the normal vector $n$
    \begin{equation}\label{specular reflection}
    \begin{aligned}
      ( v' (v, w, n) - w' (v, w, n)) \cdot n = - ( v - w) \cdot n\\
       ( v^o (v, v_*, n) - v_*^o (v, v_*, n)) \cdot n = - ( v - v_*) \cdot n
    \end{aligned}
    \end{equation}
\end{itemize}
\subsection[Conservation laws for the Boltzmann--Boltzmann system]{Conservation laws for the\\ Boltzmann--Boltzmann system}

We can naturally establish conservation laws for the coupled Boltzmann equations using the following definitions:
\begin{equation}\label{def:macroscopic quantities for gas phase}
    \begin{aligned}
        &\rho_g(t,x):=\int_{\R^3}f(t,x,w)\dd  w\\
        &u_g(t,x):=\frac{\mathds{1}_{\rho_g (t,x)>0}}{\rho_g(t,x)}\int_{\R^3}wf(t,x,w)\dd  w\\
        &P_g(t,x):=\int_{\R^3}(w-u_g(t,x))^{\otimes 2}f(t,x,w)\dd  w\\
        &\theta_g(t,x):=\frac{\mathds{1}_{\rho_g (t,x)>0}}{\rho_g(t,x)}\int_{\R^3}\frac{|w-u_g(t,x)|^2}{2}f(t,x,w)\dd  w\\
        &q_g(t,x):=\frac{1}{2}\int_{\R^3}(w-u_g(t,x))|w-u_g(t,x)|^2f(t,x,w)\dd  w.
    \end{aligned}
\end{equation}
We can also define the same macroscopic quantities for liquid with subscript $\ell$.
We multiply the coupled Boltzmann equations \eqref{coupled boltzmann equations} by $1, w, \frac{1}{2}|w|^2$ for the gas molecules and $1, v, \frac{1}{2}|v|^2$ for the particles, and then integrate with respect to the velocity variables $w$ or $v$, respectively. Moreover, with the help of the following proposition, we obtain the interaction between the two species:
\begin{proposition}
For any test function $\varphi(v)\in C^\infty(\R^3)$, we have the following identity:
\begin{multline} \label{weak formula for summation of boltzmann collision integrals}
\eta \int_{\mathbb{R}^3} \mathcal{B}_1[f,F](w) \varphi(w)\, \mathrm{d}w 
+ \int_{\mathbb{R}^3} \mathcal{B}_2[F,f](v) \varphi(v)\, \mathrm{d}v \\
= \int_{\mathbb{R}^3 \times \mathbb{S}^2 \times \mathbb{R}^3} 
f(w) F(v) b(v - w, n) 
\left[\eta\varphi(w') + \varphi(v') -\eta \varphi(w) - \varphi(v) \right]
\, \mathrm{d}n \, \mathrm{d}v \, \mathrm{d}w.
\end{multline}
\end{proposition}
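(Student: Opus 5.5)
Both sides are written as integrals over $(w,v,n)\in\R^3\times\R^3\times\sph^2$, so the plan is to put the two left-hand terms into this form and then apply the pre-to-post change of variables $(v,w)\mapsto(v',w')$ at fixed $n$ to the gain parts. Substituting the definitions of $\mathcal{B}_1$ and $\mathcal{B}_2$ gives
\[
\eta\int \mathcal{B}_1[f,F]\varphi(w)\,\dd w+\int\mathcal{B}_2[F,f]\varphi(v)\,\dd v
=\int \big(f(w')F(v')-f(w)F(v)\big)\,b(v-w,n)\,\big[\eta\varphi(w)+\varphi(v)\big]\,\dd n\,\dd v\,\dd w .
\]
Fubini is justified formally, assuming enough integrability of $f,F,\varphi$. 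The loss term $-f(w)F(v)\,b\,[\eta\varphi(w)+\varphi(v)]$ already matches the last two terms on the right of \eqref{weak formula for summation of boltzmann collision integrals}.

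For the gain term
\[
\int f(w')F(v')\,b(v-w,n)\,\big[\eta\varphi(w)+\varphi(v)\big]\,\dd n\,\dd v\,\dd w,
\]
I fix $n$ and substitute $(\tilde v,\tilde w)=(v',w')$. Three earlier facts do the work.
\begin{itemize}
\item By the reversibility \eqref{revisible collision}, the map is an involution, so $v=v'(\tilde v,\tilde w,n)$ and $w=w'(\tilde v,\tilde w,n)$.
\item By \eqref{jacobian of pre-to-post transformation}, $\dd v\,\dd w=\dd\tilde v\,\dd\tilde w$.
\item By \eqref{specular reflection}, $(\tilde v-\tilde w)\cdot n=-(v-w)\cdot n$, so $|(\tilde v-\tilde w)\cdot n|=|(v-w)\cdot n|$.
\end{itemize}

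The kernel needs separate care, and I expect it to be the main obstacle. Because $b(z,n)=\tilde b(|z|,|z\cdot n|)$, invariance of $b$ also requires $|\tilde v-\tilde w|=|v-w|$, and this fails for unequal masses. Write $\alpha=(v-w)\cdot n$. From \eqref{collision formula between different species},
\[
v'-w'=(v-w)-2\alpha n ,
\]
since the coefficients give $\tfrac{2\eta}{1+\eta}+\tfrac{2}{1+\eta}=2$. This is a reflection of $v-w$ across the plane $n^\perp$, so $|v'-w'|=|v-w|$ after all. Hence $b(\tilde v-\tilde w,n)=b(v-w,n)$. The evenness \eqref{change the sign of n} is not needed, because $n$ is held fixed throughout.

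After the substitution and renaming $\tilde v,\tilde w$ back to $v,w$, the gain term becomes
\[
\int f(w)F(v)\,b(v-w,n)\,\big[\eta\varphi(w')+\varphi(v')\big]\,\dd n\,\dd v\,\dd w .
\]
Adding the loss term gives \eqref{weak formula for summation of boltzmann collision integrals}. The weight $\eta$ plays no role in the change of variables; it only matters in later applications, where choosing $\varphi=1,v,|v|^2/2$ makes the bracket vanish by conservation of mass-weighted momentum and energy.
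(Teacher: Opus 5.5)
Your proof is correct and follows the paper's route: combine the two terms into a single integral over $(w,v,n)$, then apply the pre-to-post substitution $(v,w)\mapsto(v',w')$ at fixed $n$ to the gain part, using the involution property, the unit Jacobian, and the invariance of $b$. One correction: delete the aside that $|v'-w'|=|v-w|$ ``fails for unequal masses''. Your own computation $v'-w'=(v-w)-2\bigl((v-w)\cdot n\bigr)n$ shows that the relative speed is preserved for every mass ratio, and this is exactly why $b(v'-w',n)=b(v-w,n)$.
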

The proof of the proposition follows directly by performing the pre-to-post transformation on the gain terms of both collision integrals. One can also refer to \cite{BD2016,DGR2019} for details.
Because momentum and energy are conserved during collisions, as expressed by \eqref{collision conserves momentum and energy}, taking $\varphi(v)=v$ or $|v|^2$ in the weak formulation \eqref{weak formula for summation of boltzmann collision integrals} implies that the right-hand side of equation \eqref{weak formula for summation of boltzmann collision integrals} vanishes. Consequently, the force and energy are balanced between the two phases.

Hence, we obtain the equations for the continuous phase (fluid bed)
\begin{equation}\label{local conservation laws for particle of BB system}\left\{
    \begin{aligned}
        &\p_t\rho_g+\div_x(\rho_gu_g)=0\\
        &\p_t(\rho_gu_g)+\div_x(\rho_gu_g^{\otimes 2})+\div_xP_g=F^{\mathrm{int}}_B\\
        &\p_t\left(\rho_g\frac{1}{2}|u_g|^2+\rho_g\theta_g\right)+\div_x\left(\rho_g\frac{1}{2}u_g|u_g|^2+\rho_gu_g\theta_g\right)+\div_x(P_gu_g+q_g)=E^{\mathrm{int}}_B,
    \end{aligned}\right.
\end{equation}
and the equations for the dispersed phase (liquid)
\begin{equation}\label{local conservation laws for gas of BB system}\left\{
    \begin{aligned}
        &\p_t\rho_\ell +\div_x(\rho_\ell u_\ell  )=0\\
        &\p_t(\rho_\ell u_\ell  )+\div_x(\rho_\ell u_\ell  ^{\otimes 2})+\div_xP_\ell  =-\eta F^{\mathrm{int}}_B\\
        &\p_t\left(\rho_\ell \frac{1}{2}|u_\ell  |^2+\rho_\ell \theta_\ell \right)+\div_x\left(\rho_\ell \frac{1}{2}u_\ell  |u_\ell  |^2+\rho_\ell u_\ell  \theta_\ell \right)+\div_x(P_\ell  u_\ell  +q_\ell )=-\eta E^{\mathrm{int}}_B,
    \end{aligned}\right.
\end{equation}
where the interaction force $F^{\mathrm{int}}_B$ and energy exchange $E^{\mathrm{int}}_B$ are given by
\begin{equation}\label{def:boltzmann interaction momentum and energy}
    F^{\mathrm{int}}_B(t,x):=\int_{\R^3}\mathcal{B}_1[f,F](t,x,w)w\dd  w,\qquad E^{\mathrm{int}}_B(t,x):=\int_{\R^3}\mathcal{B}_1[f,F](t,x,w)\frac{|w|^2}{2}\dd  w.
\end{equation}
We define the total mass, momentum, and energy of each species by
\begin{equation}\label{def:total mass momentum and energy}
    \begin{aligned}
        &M_g(t):=\int_{\R^3}\rho_g\dd  x\\
        &Q_g(t):=\int_{\R^3}\rho_gu_g\dd  x\\
        &E_g(t):=\int_{\R^3}\rho_g\left(\frac{|u_g|^2}{2}+\theta_g\right)\dd  x\\
    \end{aligned}\qquad
    \begin{aligned}
        &M_\ell (t):=\int_{\R^3}\rho_\ell \dd  x\\
        &Q_\ell (t):=\int_{\R^3}\rho_\ell u_\ell  \dd  x\\
        &E_\ell (t):=\int_{\R^3}\rho_\ell\left( \frac{|u_\ell  |^2}{2}+ \theta_\ell\right) \dd  x.\\
    \end{aligned}
\end{equation}
From these equations, we observe that both species conserve mass individually:
\begin{equation}\label{conserve total mass}
 \dt M_g=\dt M_\ell =0.   
\end{equation}
Although neither species does not conserve momentum and energy, the total momentum and energy of the system are conserved:
\begin{equation}\label{conserve total momentum}
    \dt (\eta Q_g+Q_\ell )=0, \quad\text{and ~}\quad\dt (\eta E_g+E_\ell )=0.
\end{equation}

\subsection{Entropy for the Boltzmann--Boltzmann system}
Studying the entropy of the coupled Boltzmann equations is essential for understanding the thermodynamic behavior and convergence dynamics of multicomponent kinetic systems. 
Entropy functionals enable one to rigorously analyze the H-theorem in the coupled context, quantify dissipation rates, and derive convergence to equilibrium with explicit rates under appropriate conditions. 
Building on foundational entropy frameworks in kinetic theory (e.g. Villani's review on collisional kinetic theory \cite{VILLANI2002} and the quantitative approach to trend to equilibrium \cite{DV2005}) entropy-based techniques have also been applied in multispecies and reaction-diffusion settings (e.g., \cite{DFT2017}). Extending these methods to truly coupled Boltzmann systems allows the derivation of entropy-entropy dissipation inequalities tailored to interspecies interactions, enabling the study of exponential return to equilibrium, propagation of regularity, and robust numerical schemes that preserve structural properties, e.g. \cite{BD2016,DJMZ2016}.

The entropy of the system \eqref{coupled boltzmann equations} is simply the sum of the two Boltzmann entropies:
\begin{equation}\label{def:entropy for BB system}
    S_{\mathrm{BB}}(t):=\int_{\R^3\times \R^3}f(t,x,w)\ln f(t,x,w)\dd  x\dd  w+\int_{\R^3\times \R^3}F(t,x,v)\ln F(t,x,v)\dd  x\dd  v.
\end{equation}
We then state the following H-theorem for the binary mixture:
\begin{proposition}\label{prop:entropy inequality for BB system}
Let $f$ and $F$ be classical solutions of \eqref{coupled boltzmann equations}. Assume that $(x,v)\mapsto F(t,x,v)$ and $(x,w)\mapsto f(t,x,w)$ decay rapidly at infinity, while $(x,w)\mapsto \ln f(t,x,w)$ and $(x,v)\mapsto \ln F(t,x,v)$ have at most polynomial growth at infinity. Then
\begin{equation} 
    \dt S_{\mathrm{BB}}\le 0.
\end{equation}
\end{proposition}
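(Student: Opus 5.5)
The plan is to differentiate $S_{\mathrm{BB}}$ in time, insert the equations \eqref{coupled boltzmann equations}, and show that the transport terms vanish and that each collision contribution is nonpositive. Write $\partial_t(f\ln f)=(1+\ln f)\,\partial_t f$, and similarly for $F$. The free-streaming contribution is $-\int w\cdot\nabla_x f\,(1+\ln f)\,\mathrm{d}x\,\mathrm{d}w=-\int w\cdot\nabla_x(f\ln f)\,\mathrm{d}x\,\mathrm{d}w$, which vanishes after integrating in $x$. The decay of $f$ together with the polynomial growth of $\ln f$ justifies both this integration by parts and the differentiation under the integral sign; the same holds for $F$. We are left with
\[
\frac{\mathrm d}{\mathrm dt}S_{\mathrm{BB}}=\int(\mathcal B[f]+\mathcal B_1[f,F])(1+\ln f)\,\mathrm dx\,\mathrm dw+\int(\mathcal B_2[F,f]+\mathcal B[F])(1+\ln F)\,\mathrm dx\,\mathrm dv .
\]

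The self-collision terms are the classical ones. Using the involution \eqref{revisible collision}, the unit Jacobian \eqref{jacobian of pre-to-post transformation}, the exchange $w\leftrightarrow w_1$, and the invariance of $b$ under these maps, one rewrites $\int\mathcal B[f]\,\varphi\,\mathrm dw$ as
\[
\tfrac14\int\bigl(f^o f_1^o-ff_1\bigr)\bigl(\varphi+\varphi_1-\varphi^o-\varphi_1^o\bigr)\,b .
\]
Taking $\varphi=1$ gives zero. Taking $\varphi=\ln f$ gives
\[
-\tfrac14\int\bigl(f^of_1^o-ff_1\bigr)\ln\frac{f^of_1^o}{ff_1}\,b\le 0,
\]
because $(a-c)\ln(a/c)\ge0$. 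The term $\mathcal B[F]$ is handled in the same way.

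For the cross terms, I would not use the $\eta$-weighted identity \eqref{weak formula for summation of boltzmann collision integrals} directly, since the entropy is not weighted by $\eta$. Instead, observe that $\mathcal B_1$ and $\mathcal B_2$ have the same integrand $(f(w')F(v')-f(w)F(v))\,b(v-w,n)$, integrated over $v$ or over $w$ respectively. Integrating against $1+\ln f(w)$ in $w$ and $1+\ln F(v)$ in $v$ therefore gives
\[
\int\bigl(f(w')F(v')-f(w)F(v)\bigr)\bigl(2+\ln(f(w)F(v))\bigr)\,b\,\mathrm dn\,\mathrm dv\,\mathrm dw .
\]
Next apply the change of variables $(v,w)\mapsto(v',w')$ at fixed $n$. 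It is an involution by \eqref{revisible collision}, has unit Jacobian by \eqref{jacobian of pre-to-post transformation}, and leaves $b$ invariant. For the last point, $v'-w'=(v-w)-2((v-w)\cdot n)n$ is a reflection, so $|v'-w'|=|v-w|$, and $|(v'-w')\cdot n|=|(v-w)\cdot n|$ by \eqref{specular reflection}. Averaging the two forms of the integral yields
\[
-\tfrac12\int\bigl(f'F'-fF\bigr)\bigl(\ln(f'F')-\ln(fF)\bigr)\,b\le0 ,
\]
where the constant $2$ drops out because its gain and loss parts cancel under the same change of variables.

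The only delicate points are bookkeeping ones. First, the invariance of $b$ must be checked for the asymmetric-mass map \eqref{collision formula between different species}. Second, every integral must be finite, including the gain and loss parts separately. This follows from the rapid decay of $f,F$ and the polynomial growth of $\ln f,\ln F$, provided $b$ grows at most polynomially; that is implicit in the assumptions on the kernel. Summing the three nonpositive contributions gives $\frac{\mathrm d}{\mathrm dt}S_{\mathrm{BB}}\le 0$.
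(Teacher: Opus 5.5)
Your proof is correct and has the same structure as the paper's. The transport terms integrate to zero, the self-collisions are handled by the classical H-theorem, and the cross terms are combined without weights because $\mathcal{B}_1$ and $\mathcal{B}_2$ share the integrand; the involution $(v,w)\mapsto(v',w')$, with unit Jacobian and $b$ invariant under it, then finishes the argument. The only difference is the last step: you symmetrize fully and use $(a-c)\ln(a/c)\ge 0$. The paper instead moves only the gain term to get $\int fF\ln\frac{f(w')F(v')}{f(w)F(v)}\,b$ and applies $\ln z\le z-1$. That form is the one that carries over to the Enskog entropy proof, where the gain-minus-loss remainder no longer vanishes and must be absorbed by the potential part of $S_{\mathrm{EE}}$.
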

Its proof is given in Appendix \ref{apd}.

\section{Main results}\label{sec:main result}

\subsection{The coupled Enskog equations}\label{sec:the coupled Enskog equations}
Our first main result is a formal derivation of the coupled Enskog equations under an Enskog closure assumption.

\begin{theorem}
\label{prop:formal derivation of coupled Enskog equations}
Consider the two-species hard-sphere system described in Section \ref{sec:derive coupled kinetic equations}, with elastic collisions and indistinguishable particles within each species.
Assume that its joint distribution satisfies the Liouville equation \eqref{liouville equation} and the collision boundary conditions \eqref{boundary condition for g}.

Let $f$ and $F$ denote the one-particle mass density distribution functions of the gas molecules and particles, respectively.
Assume that the two-particle distributions satisfy the Enskog closure relations \eqref{def:correlation functions}, including at the collision boundaries.  
Assume sufficient regularity, integrability, and decay for the integrations and changes of variables used in the derivation.

Then the first equations of the two-species BBGKY hierarchy
formally reduce to the coupled Enskog system
\begin{equation}\label{coupled Enskog equations}
\left\{
\begin{aligned}
    &\p_tf+w\cdot\nabla_{x}f=\mathcal{E}_0[f]+\mathcal{E}_1[f,F],\\
    &\p_tF+v\cdot\nabla_{x}F=\mathcal{E}_2[F,f]+\mathcal{E}_3[F],
\end{aligned}\right.
\end{equation}
where the collision operators are defined in
\eqref{def:collision integral E_0},
\eqref{def:collision integral E_1},
\eqref{def:collision integral E_2}, and
\eqref{def:collision integral E_3}.
\end{theorem}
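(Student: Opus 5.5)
We derive the system by integrating the Liouville equation \eqref{liouville equation} over all but one particle, separately for a tagged gas molecule and a tagged particle. Let $\gnm$ be the symmetric joint density on $\onm$, with $N_g$ gas molecules of radius $r_g$, mass $m_g$ and $N_p$ particles of radius $r_p$, mass $m_p$. The one-particle mass densities are $f=N_g m_g P^{(1,0)}$ and $F=N_p m_p P^{(0,1)}$, and the two-particle marginals are $P^{(2,0)}$, $P^{(1,1)}$ and $P^{(0,2)}$. To obtain the equation for $f$, we integrate the Liouville equation in the variables of all particles except gas molecule $1$. The transport terms of the integrated-out particles are total divergences on $\onm$. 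By the divergence theorem, each of them reduces to a flux through the contact surfaces. Surfaces not involving molecule $1$ either cancel in pairs or vanish by specular reflection. What survives are the contact surfaces $|x-x_j|=2r_g$ with another gas molecule and $|x-y_k|=r_g+r_p$ with a particle. Each such surface integral has the form
\[
(N_g-1)\,(2r_g)^2\int_{\R^3\times\sph^2} P^{(2,0)}(x,w,x+2r_g n,w_1)\,(w_1-w)\cdot n\,\dd n\,\dd w_1 ,
\]
and the gas--particle surface gives the analogous term with $P^{(1,1)}$ and radius $r_g+r_p$. The equation for $F$ follows in the same way.

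Each surface integral is then split into the incoming half $(w-w_1)\cdot n>0$ and the outgoing half. On the post-collisional half we use the boundary condition \eqref{boundary condition for g}, which says the density equals its value at the pre-collision velocities. These are given by \eqref{collision formula between same species} for like species and by \eqref{collision formula between different species} for unlike species. We then apply the change of variables $(w,w_1)\mapsto(w^o,w_1^o)$, using the involution \eqref{revisible collision}, the unit Jacobian \eqref{jacobian of pre-to-post transformation} and the sign flip \eqref{specular reflection}. This turns the outgoing half into a gain term. Where needed, we also substitute $n\mapsto -n$ via \eqref{change the sign of n}. This substitution fixes the convention for the spatial shift: the partner sits at $x-2r_g n$ in the gain term and at $x+2r_g n$ in the loss term, or the reverse, according to the chosen orientation.

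Next we insert the Enskog closure \eqref{def:correlation functions}. This replaces $P^{(2,0)}$ at contact by a contact factor times $f(x,\cdot)f(x\pm 2r_g n,\cdot)$, normalised by $m_g^{-2}$. The cross marginal is replaced in the same way by a contact factor times $fF$ at points separated by $(r_g+r_p)n$, normalised by $(m_gm_p)^{-1}$. Multiplying through by $N_gm_g$ (respectively $N_pm_p$) and keeping track of the factors $N_g-1$ and $N_p$ produces the operators $\mathcal{E}_0,\dots,\mathcal{E}_3$. Each carries its mass-dependent prefactor: for example, $\mathcal{E}_1$ acquires a factor of the form $N_p(r_g+r_p)^2/(N_pm_p)=(r_g+r_p)^2/m_p$ times the scaling constant. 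We then read off \eqref{def:collision integral E_0}--\eqref{def:collision integral E_3}, with the kernel $|(v-w)\cdot n|$ supplying $b$.

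The main obstacle is the bookkeeping in the second step for the mixed operators $\mathcal{E}_1$ and $\mathcal{E}_2$. The two colliding centres have different radii and masses, so the shift $x\pm(r_g+r_p)n$ is not symmetric between the tagged species. Moreover, the pre-collision map is not symmetric in $(v,w)$ when $\eta\neq1$. The orientation of $n$, outward from the tagged particle, must therefore be fixed consistently. It has to be checked that after the change of variables the gain term evaluates $F$ at the shifted point with the correct sign, and that the involution property \eqref{revisible collision} applies in the unequal-mass case. I would first verify this on the dilute limit, where the shift vanishes, against the operators $\mathcal{B}_1$ and $\mathcal{B}_2$. After that I would reinstate the shifts and confirm that the sign convention in $\mathcal{E}_1$ is the mirror of the one in $\mathcal{E}_2$.
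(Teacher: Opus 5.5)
Your route is the paper's: integrate the Liouville equation over all but one tagged molecule, turn the transport terms into contact fluxes by Gauss's theorem, discard the three-body fluxes, split each two-body flux by hemisphere, apply \eqref{boundary condition for g} on the outgoing half, parametrize the contact sphere by $n$, flip $n$ on one half, and pass to mass densities before inserting the closure. One step, however, is invalid as written. In a two-body flux the tagged velocity $w$ is the free variable of the kinetic equation. Only $w_1$ and $n$ are integrated, so the change of variables $(w,w_1)\mapsto(w^o,w_1^o)$ cannot be made there. Nor is it needed. Once the boundary condition replaces the two-particle marginal on the outgoing half by its value at $(w^o,w_1^o)$, that half already \emph{is} the gain term. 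Its variables are the post-collisional $(w,w_1)$ and its arguments are pre-collisional velocities, exactly as in \eqref{def:collision integral E_0}--\eqref{def:collision integral E_3}. The involution \eqref{revisible collision} and the unit Jacobian \eqref{jacobian of pre-to-post transformation} are needed only where both colliding velocities are dummy variables. That happens in the three-body fluxes (the paper's substitution $V_{(N_p)}\mapsto R_{23}(V_{(N_p)})$) and later in the weak forms. Note also that the $-w\cdot n$ part of your flux $(w_1-w)\cdot n$ does not come from the integrated-out particles. It comes from commuting the tagged molecule's own $w\cdot\nabla_x$ with the $x$-dependent integration domain.

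Second, the shift directions are not a matter of ``chosen orientation''. The check you postpone as the main obstacle follows at once from your own set-up, with no dilute-limit comparison. With the partner at $x+2r_gn$, the half $(w-w_1)\cdot n>0$ is incoming and gives the loss at $x+2r_gn$. The outgoing half, after the boundary condition and $n\mapsto-n$ via \eqref{change the sign of n}, gives the gain at $x-2r_gn$. For a tagged gas molecule and a particle at $x+(r_g+r_p)n$ the flux is $(v-w)\cdot n$, so now the half $(v-w)\cdot n>0$ is the outgoing one. The boundary condition then gives, after the closure, the gain $f(x,w')F(x+(r_g+r_p)n,v')$ directly. The other half, after $n\mapsto-n$, gives the loss with $F$ at $x-(r_g+r_p)n$; this is $\mathcal{E}_1$. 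For a tagged particle with the gas molecule at $x+(r_g+r_p)n$ the flux is $(w-v)\cdot n$. The two halves exchange roles, and you get the mirrored shifts of $\mathcal{E}_2$.

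Finally, there is no extra scaling constant. The closures are $P^{(1,1)}=Y^{(1,1)}[F,f]\,F\otimes f/(m_pm_gN_pN_g)$, $P^{(2,0)}=Y^{(2,0)}[F,F]\,F\otimes F/(m_p^2N_p(N_p-1))$ and $P^{(0,2)}=Y^{(0,2)}[f,f]\,f\otimes f/(m_g^2N_g(N_g-1))$. These use the paper's labels, where $P^{(1,0)}$ is the particle marginal, the reverse of yours. Multiplying by $N_gm_g$ or $N_pm_p$ then produces exactly the prefactors $(2r_g)^2/m_g$, $(r_g+r_p)^2/m_p$, $(r_g+r_p)^2/m_g$, $(2r_p)^2/m_p$ of $\mathcal{E}_0,\dots,\mathcal{E}_3$. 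With the spurious velocity substitution removed and these computations written out, your argument coincides with the paper's.
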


The collision operators appearing in Theorem \ref{prop:formal derivation of coupled Enskog equations} are defined as follows:

\begin{multline}\label{def:collision integral E_0}
    \mathcal{E}_0[f]:=\frac{(2r_g)^2}{m_g}\int_{(w-w_*)\cdot n>0}\left|(w-w_*)\cdot n\right|\bigg[Y^{(0,2)}[f,f](x,x-2r_gn)\\
    \times f(x,w^o)f(x-2r_gn,w_*^o)-Y^{(0,2)}[f,f](x,x+2r_gn)f(x,w)f(x+2r_gn,w_*)\bigg]\dd  n\dd  w_*,
\end{multline}
\begin{multline}\label{def:collision integral E_1}
\mathcal{E}_1[f,F]:=\frac{(r_g+r_p)^2}{m_p}\int_{(v-w)\cdot n>0}|(v-w)\cdot n| \bigg[Y^{(1,1)}[f,F](x,x+(r_g+r_p)n)\\
\times f(x,w')F(x+(r_g+r_p)n,v')-Y^{(1,1)}[f,F](x,x-(r_g+r_p)n)f(x,w)F(x-(r_g+r_p)n,v)\bigg]\dd  n\dd  v,
\end{multline}
\begin{multline}\label{def:collision integral E_2}
    \mathcal{E}_2[F,f]:=\frac{(r_g+r_p)^2}{m_g}\int_{(v-w)\cdot n>0}|(v-w)\cdot n| \bigg[Y^{(1,1)}[F,f](x,x-(r_g+r_p)n)\\
\times F(x,v')f(x-(r_g+r_p)n,w')-Y^{(1,1)}[F,f](x,x+(r_g+r_p)n)F(x,v)f(x+(r_g+r_p)n,w)\bigg]\dd  n\dd  w,
\end{multline}
and 
\begin{multline}\label{def:collision integral E_3}
    \mathcal{E}_3[F]:=\frac{(2r_p)^2}{m_p}\int_{(v-v_*)\cdot n>0}\left|(v-v_*)\cdot n\right|\bigg[Y^{(2,0)}[F,F](x,x-2r_pn)\\
    \times F(x,v^o)F(x-2r_pn,v_*^o)-Y^{(2,0)}[F,F](x,x+2r_pn)F(x,v)F(x+2r_pn,v_*)\bigg]\dd  n\dd  v_*,
\end{multline}
where the pre-collision velocities are given by \eqref{collision formula between same species} and \eqref{collision formula between different species}, and the correlation pair functions $Y^{(i,j)}$ for $i,j\in\{0,1,2\}$ are defined later in \eqref{def:correlation functions}.

The derivation process is presented in Section \ref{sec:derive coupled kinetic equations}.
A central contribution of this work is to provide a detailed mathematical derivation of the coupled Enskog equations, with particular attention to the directions of the spatial displacements associated with collisions. 
Starting from the BBGKY hierarchy for a two-species hard-sphere system and imposing the Enskog closure assumption, we recover the coupled Enskog collision operators, including the cross-species collision terms and their mass-dependent coefficients. 
This derivation makes explicit the nonlocal structure induced by the finite separation of colliding particles and provides a self-contained basis for the subsequent analysis of conservation laws and entropy.

Please note that in this context, $f$ and $F$ represent the \emph{mass density} distribution functions of gas molecules and particles, respectively. 
If instead we wish to interpret 
\[f_{\text{num}}:=\frac{f}{m_g},\qquad \text{and}\qquad F_{\text{num}}:=\frac{F}{m_p}
\]
as \emph{number density} distribution functions, then the prefactors $1/m_g$ and $1/m_p$ in front of the corresponding collision integrals should be removed accordingly.

\subsection{Conservation laws for the Enskog--Enskog system}\label{sec:Conservation laws for Enskog-Enskog system}
The classical Boltzmann collision integral conserves mass, momentum, and energy. In other words,
\begin{equation}\label{boltzmann collision integral conserves physical quantities}
\int_{\R^3}\mathcal{B}[f](w)\begin{pmatrix}1\\w_1\\w_2\\w_3\\|w|^2\end{pmatrix}\dd  w=\begin{pmatrix}0\\0\\0\\0\\0\end{pmatrix}.
\end{equation}
Although the system governed by the Enskog--Enskog equations \eqref{coupled Enskog equations} conserves mass, total momentum, and total energy globally for unit contact factors, it does not satisfy the same local conservation laws as in \eqref{local conservation laws for particle of BB system} and \eqref{local conservation laws for gas of BB system}.
We adopt the same notation as in \eqref{def:macroscopic quantities for gas phase}, \eqref{def:boltzmann interaction momentum and energy}, and \eqref{def:total mass momentum and energy}.
From Theorem 3.3 in \cite{CCG2026}, we know that the Boltzmann--Enskog collision integral can be expressed in the following conservative form:
\begin{equation}\label{Enskog collision integral conserves physical quantities}
\mathcal{E}_0[f](w)\begin{pmatrix}1\\w_1\\w_2\\w_3\\|w|^2\end{pmatrix}=\begin{pmatrix}\div_v\mathbb{J}_0\\\div_x\mathbb{I}_1+\div_v\mathbb{J}_1\\\div_x\mathbb{I}_2+\div_v\mathbb{J}_2\\\div_x\mathbb{I}_3+\div_v\mathbb{J}_3\\\div_x\mathbb{I}_4+\div_v\mathbb{J}_4\end{pmatrix},
\end{equation}
where explicit definitions of the quantities $\mathbb{I}_k$ and $\mathbb{J}_k$ for $k=0,1,2,3,4$ are given in \cite{CCG2026}. The remaining question is how to handle the interaction terms $\mathcal{E}_1$ and $\mathcal{E}_2$. To address this question, we introduce the following proposition:


\begin{proposition}\label{prop:weak formula for enskog collision integral}
Assume that the cross-collision operators in \eqref{def:collision integral E_1} and \eqref{def:collision integral E_2} have unit contact factors and obey the collision laws \eqref{collision formula between different species}. For any smooth scalar or vector test function $\varphi(x,v)$, we have
\begin{multline}\label{weak formula for summation of enskog collision integrals}
\int_{\R^3}\mathcal{E}_1[f,F](x,w)\varphi(x,w)\dd w
+\int_{\R^3}\mathcal{E}_2[F,f](x,v)\varphi(x,v)\dd v
+\div_x I[f,F;\varphi](x)\\
=\frac{(r_g+r_p)^2}{m_g}
\int_{\R^3\times\R^3\times\sph^2}
f(x+(r_g+r_p)n,w)F(x,v)[(v-w)\cdot n]_+\\
\times\bigl[\eta\bigl(\varphi(x+(r_g+r_p)n,w')-\varphi(x+(r_g+r_p)n,w)\bigr)
+\varphi(x,v')-\varphi(x,v)\bigr]\dd n\dd v\dd w,
\end{multline}
where $[z]_+=\max\{z,0\}$ and
\begin{multline}\label{def:cross-collisional-flux}
I[f,F;\varphi](x):=\frac{(r_g+r_p)^2}{m_p}
\int_0^{r_g+r_p}\int_{\R^3\times\R^3\times\sph^2}
n[(v-w)\cdot n]_+\\
\times\bigl(\varphi(x+sn,w')-\varphi(x+sn,w)\bigr)
f(x+sn,w)\\
\times F(x-(r_g+r_p-s)n,v)\dd n\dd v\dd w\dd s.
\end{multline}
\end{proposition}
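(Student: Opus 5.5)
With unit contact factors, the plan is to bring each cross-collision operator into weak form separately, using the pre-to-post change of variables. The two weak forms will turn out to be evaluated at different spatial configurations of the colliding pair. The mismatch will then be written as an exact $s$-derivative along the segment joining the two centres, which produces $\div_x I$. Throughout, set $a:=r_g+r_p$ and use $1/m_p=\eta/m_g$, since $\eta=m_g/m_p$; this is where the factor $\eta$ in \eqref{weak formula for summation of enskog collision integrals} comes from.

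\textbf{Step 1: weak form of $\mathcal{E}_1$.} Multiply $\mathcal{E}_1[f,F](x,w)$ by $\varphi(x,w)$ and integrate in $w$. The loss term already has the form $-\frac{\eta a^2}{m_g}\int [(v-w)\cdot n]_+ f(x,w)F(x-an,v)\varphi(x,w)$. In the gain term, for fixed $n$, change variables $(v,w)\mapsto(v',w')$. This map has unit Jacobian by \eqref{jacobian of pre-to-post transformation} and is an involution by \eqref{revisible collision}. By \eqref{specular reflection} it sends the half-space $(v-w)\cdot n>0$ to $(v-w)\cdot n<0$ and preserves $|(v-w)\cdot n|$. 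Next substitute $n\mapsto -n$. This restores the half-space $(v-w)\cdot n>0$, turns $F(x+an,\cdot)$ into $F(x-an,\cdot)$, and leaves $w'$ unchanged by \eqref{change the sign of n}. Combining gain and loss gives
\[
\int\mathcal{E}_1\varphi\,\dd w=\frac{\eta a^2}{m_g}\int [(v-w)\cdot n]_+ f(x,w)F(x-an,v)\bigl(\varphi(x,w')-\varphi(x,w)\bigr)\dd n\dd v\dd w .
\]

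\textbf{Step 2: weak form of $\mathcal{E}_2$.} Multiply $\mathcal{E}_2[F,f](x,v)$ by $\varphi(x,v)$ and integrate in $v$. The same two substitutions, applied to the gain term, now turn $f(x-an,\cdot)$ into $f(x+an,\cdot)$. This gives
\[
\int\mathcal{E}_2\varphi\,\dd v=\frac{a^2}{m_g}\int [(v-w)\cdot n]_+ F(x,v)f(x+an,w)\bigl(\varphi(x,v')-\varphi(x,v)\bigr)\dd n\dd v\dd w,
\]
which is exactly the $\varphi(x,v')-\varphi(x,v)$ part of the right-hand side of \eqref{weak formula for summation of enskog collision integrals}.

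\textbf{Step 3: interpolation along the line of centres.} For $s\in[0,a]$ define
\[
\Psi(s):=\frac{\eta a^2}{m_g}\int n[(v-w)\cdot n]_+ \bigl(\varphi(x+sn,w')-\varphi(x+sn,w)\bigr)f(x+sn,w)F(x-(a-s)n,v)\dd n\dd v\dd w ,
\]
and let $\Psi^\circ(s)$ be the same integral without the factor $n$. Then $\Psi^\circ(0)=\int\mathcal{E}_1\varphi\,\dd w$. Moreover $\Psi^\circ(a)$ is exactly the $\eta$-part of the right-hand side of \eqref{weak formula for summation of enskog collision integrals}.

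Every $s$-dependence in $\Psi^\circ(s)$ enters through the arguments $x+sn$ and $x-(a-s)n$, and both move with velocity $n$ as $s$ increases. By the chain rule, $\frac{\dd}{\dd s}$ of the integrand therefore equals $n\cdot\nabla_x$ of the integrand. Hence
\[
\Psi^\circ(a)-\Psi^\circ(0)=\int_0^a\div_x\Psi(s)\,\dd s=\div_x I[f,F;\varphi](x),
\]
since $\frac{\eta a^2}{m_g}=\frac{a^2}{m_p}$ matches the prefactor in \eqref{def:cross-collisional-flux}. Adding the results of Steps 1 and 2 then gives \eqref{weak formula for summation of enskog collision integrals}.

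\textbf{Main obstacle.} The delicate point is sign bookkeeping in Steps 1 and 2. One must check that the combination of the involution and $n\mapsto-n$ flips the spatial shift in the intended direction, and that $[(v-w)\cdot n]_+$ is restored rather than $[(v-w)\cdot n]_-$. I would verify this first on the loss/gain pair of $\mathcal{E}_1$ before doing $\mathcal{E}_2$. Interchanging $\frac{\dd}{\dd s}$, $\div_x$ and the velocity integrals is taken as formal, justified by the regularity and decay assumptions in force.
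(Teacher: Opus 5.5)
Your proposal is correct and takes essentially the same route as the paper: the pre-to-post change of variables, combined with $n\mapsto -n$, in the gain terms of $\mathcal{E}_1$ and $\mathcal{E}_2$, followed by the fundamental theorem of calculus along the segment joining the two centres, which yields $\div_x I[f,F;\varphi]$ with prefactor $(r_g+r_p)^2/m_p=\eta(r_g+r_p)^2/m_g$. Your sign and shift bookkeeping checks out. In particular, $\Psi^\circ(0)=\int\mathcal{E}_1\varphi\,\dd w$, $\Psi^\circ(r_g+r_p)$ is exactly the $\eta$-part of the right-hand side, and $\partial_s=n\cdot\nabla_x$ on the integrand.
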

The identity follows by changing pre-collision and post-collision velocities in the gain terms and applying the fundamental theorem of calculus along the segment between the two colliding centers; see also Appendix A of \cite{CCG2026}.
When we take the test function $\varphi(x,v)=v$ or $\frac{|v|^2}{2}$ in \eqref{weak formula for summation of enskog collision integrals}, the right-hand side still vanishes by \eqref{collision conserves momentum and energy}. However, the collisional transfer flux $I[f,F;\varphi]$ remains. This term introduces corrections to the macroscopic equations, leading to modifications in the momentum and energy balances compared with the Boltzmann--Boltzmann system:

\begin{equation}\label{local conservation laws for EE system}
\left\{
\begin{aligned}
&\p_t\rho_g+\div_x(\rho_gu_g)=0,\\
&\p_t(\rho_gu_g)+\div_x(\rho_gu_g^{\otimes 2})+\div_xP_g
 =F_E^{\mathrm{int}}+\int_{\R^3}w\mathcal E_0[f]\dd w,\\
&\p_t\left(\rho_g\frac{|u_g|^2}{2}+\rho_g\theta_g\right)
 +\div_x\left(\rho_gu_g\frac{|u_g|^2}{2}+\rho_gu_g\theta_g+P_gu_g+q_g\right)\\
&\hspace{70pt}=E_E^{\mathrm{int}}+\int_{\R^3}\frac{|w|^2}{2}\mathcal E_0[f]\dd w,\\
&\p_t\rho_\ell+\div_x(\rho_\ell u_\ell)=0,\\
&\p_t(\rho_\ell u_\ell)+\div_x(\rho_\ell u_\ell^{\otimes 2})
 +\div_xP_\ell+\div_xP^{\mathrm{imb}}
 =-F_E^{\mathrm{int}}+\int_{\R^3}v\mathcal E_3[F]\dd v,\\
&\p_t\left(\rho_\ell\frac{|u_\ell|^2}{2}+\rho_\ell\theta_\ell\right)
 +\div_x\left(\rho_\ell u_\ell\frac{|u_\ell|^2}{2}
 +\rho_\ell u_\ell\theta_\ell+P_\ell u_\ell+q_\ell+Q^{\mathrm{imb}}\right)\\
&\hspace{70pt}=-E_E^{\mathrm{int}}+\int_{\R^3}\frac{|v|^2}{2}\mathcal E_3[F]\dd v.
\end{aligned}\right.
\end{equation}
Here the interaction force and energy exchange are
\begin{equation}\label{def:interaction momentum and energy}
\begin{aligned}
F_E^{\mathrm{int}}(t,x)&:=\int_{\R^3}w\mathcal E_1[f,F](t,x,w)\dd w,\\
E_E^{\mathrm{int}}(t,x)&:=\int_{\R^3}\frac{|w|^2}{2}\mathcal E_1[f,F](t,x,w)\dd w,
\end{aligned}
\end{equation}
and the cross-collisional momentum and energy fluxes are
\begin{equation}\label{def:imbalanced pressure and energy flux}
P^{\mathrm{imb}}(t,x):=I[f,F;\varphi(x,v)=v],\qquad
Q^{\mathrm{imb}}(t,x):=I\left[f,F;\varphi(x,v)=\frac{|v|^2}{2}\right].
\end{equation}
The remaining moments of $\mathcal E_0$ and $\mathcal E_3$ are the self-collisional spatial flux divergences described in \eqref{Enskog collision integral conserves physical quantities}. Thus they vanish after spatial integration, and the mass-normalized Enskog system conserves the unweighted totals
\begin{equation}\label{eq:enskog-global-conservation}
\dt(Q_g+Q_\ell)=0,\qquad \dt(E_g+E_\ell)=0.
\end{equation}
The different coefficients in \eqref{def:collision integral E_1} and \eqref{def:collision integral E_2} have already been accounted for in \eqref{weak formula for summation of enskog collision integrals}.

\paragraph{Maxwellian closure}
For unit contact factors, we consider separate Maxwellians with different densities, mean velocities and specific internal energies. The fields may depend on $t$ and $x$:
\begin{equation}\label{eq:separate-maxwellians}
\begin{aligned}
f(t,x,w)&=\rho_g(t,x)
\left(\frac{3}{4\pi\theta_g(t,x)}\right)^{3/2}
\exp\left(-\frac{3|w-u_g(t,x)|^2}{4\theta_g(t,x)}\right),\\
F(t,x,v)&=\rho_\ell(t,x)
\left(\frac{3}{4\pi\theta_\ell(t,x)}\right)^{3/2}
\exp\left(-\frac{3|v-u_\ell(t,x)|^2}{4\theta_\ell(t,x)}\right).
\end{aligned}
\end{equation}
This normalization agrees with the specific internal energies in \eqref{def:macroscopic quantities for gas phase}. It gives
\[
P_g=\frac{2}{3}\rho_g\theta_g I,\qquad
P_\ell=\frac{2}{3}\rho_\ell\theta_\ell I,\qquad q_g=q_\ell=0.
\]
The Maxwellians provide a moment closure. They are not, in general, exact solutions of the cross-collision equations. When their parameters vary in space, the self-collision Enskog operators need not vanish either, because they also sample distinct centers.

Substituting \eqref{eq:separate-maxwellians} into the moment equations \eqref{local conservation laws for EE system} gives the following system for spatially varying fields:
\begin{equation}\label{eq:maxwellian-gas-moments}
\left\{
\begin{aligned}
&\p_t\rho_g+\div_x(\rho_gu_g)=0,\\
&\p_t(\rho_gu_g)+\div_x(\rho_gu_g^{\otimes 2})
  +\frac{2}{3}\nabla_x(\rho_g\theta_g)=F_E^{\mathrm{int}}+\int_{\R^3}w\mathcal E_0[f]\dd w,\\
&\p_t\left(\rho_g\frac{|u_g|^2}{2}+\rho_g\theta_g\right)
  +\div_x\left[\rho_gu_g\left(\frac{|u_g|^2}{2}
  +\frac{5}{3}\theta_g\right)\right]\\
&\hspace{100pt}=E_E^{\mathrm{int}}
  +\int_{\R^3}\frac{|w|^2}{2}\mathcal E_0[f]\dd w,
\end{aligned}\right.
\end{equation}
and
\begin{equation}\label{eq:maxwellian-particle-moments}
\left\{
\begin{aligned}
&\p_t\rho_\ell+\div_x(\rho_\ell u_\ell)=0,\\
&\p_t(\rho_\ell u_\ell)+\div_x(\rho_\ell u_\ell^{\otimes 2})
  +\frac{2}{3}\nabla_x(\rho_\ell\theta_\ell)
  +\div_xP^{\mathrm{imb}}=-F_E^{\mathrm{int}}+\int_{\R^3}v\mathcal E_3[F]\dd v,\\
&\p_t\left(\rho_\ell\frac{|u_\ell|^2}{2}+\rho_\ell\theta_\ell\right)
  +\div_x\left[\rho_\ell u_\ell\left(\frac{|u_\ell|^2}{2}
  +\frac{5}{3}\theta_\ell\right)+Q^{\mathrm{imb}}\right]\\
&\hspace{100pt}=-E_E^{\mathrm{int}}
  +\int_{\R^3}\frac{|v|^2}{2}\mathcal E_3[F]\dd v.
\end{aligned}\right.
\end{equation}
Here all collision terms are evaluated with the Maxwellians in \eqref{eq:separate-maxwellians}. 
The self-collision moments remain nonlocal when the fields vary in space. 
Thus the Maxwellian closure retains the finite-separation transfer terms and does not impose a common pressure.

The remaining integrals cannot, in general, be replaced by purely local expressions without an additional approximation, because they couple distributions evaluated at the two collision centers.
In the Boltzmann--Grad limit, $P^{\mathrm{imb}}$, $Q^{\mathrm{imb}}$, and the self-collisional spatial fluxes vanish, while the interspecies momentum and energy exchange remain; related local collision calculations are given in the appendix of \cite{DGR2019}.
For velocity distribution functions that vary slowly over a collision diameter, one may instead expand the shifted distributions about $x$ to obtain gradient corrections, as in \cite{Chen2025}.

\subsection{Entropy for the Enskog--Enskog system}\label{sec:Entropy for Enskog-Enskog system}
The entropy of the Enskog--Enskog system is more intricate. In this section, we still restrict our attention to the Boltzmann--Enskog collision integral, which means that the correlation functions $Y^{(i,j)}$ for $i,j\in\{0,1,2\}$ are set to $1$. In addition to the kinetic part, the entropy has a potential part. With this in mind, we define the entropy of the Enskog--Enskog system as follows:
\begin{multline}\label{def:entropy for EE system}
    S_{\mathrm{EE}}(t):=\frac{1}{m_g}\int_{\R^3\times \R^3}(f\ln f)(t,x,w)\dd  x\dd  w+\frac{1}{m_p}\int_{\R^3\times \R^3}(F\ln F)(t,x,v)\dd  x\dd  v\\
    +\frac{1}{2m_p^2}\int_{\R^3\times \R^3}\beta_{2r_p}(|x-y|) \rho_\ell(t,x) \rho_\ell(t,y) \dd  x\dd  y\\
    +\frac{1}{2m_g^2}\int_{\R^3\times\R^3}\beta_{2r_g}(|x-y|) \rho_g(t,x) \rho_g(t,y) \dd  x\dd  y\\
    +\frac{1}{m_gm_p}\int_{\R^3\times \R^3}\beta_{r_p+r_g}(|x-y|) \rho_\ell(t,x) \rho_\ell(t,y) \dd  x\dd  y,
\end{multline}
where the potential function $\beta_r$ for the Enskog collision integral is defined by
\[
\beta_r(s):=\int_s^\infty \delta(\tau-r)\dd  \tau=H(r-s)=\begin{cases}1, \text{~if~} r\le s,\\0, \text{~if~} r<s.\end{cases}
\]
\begin{proposition}\label{prop:entropy inequality for EE system}
Let $f$ and $F$ be classical solutions of \eqref{coupled Enskog equations}. Assume that $(x,v)\mapsto F(t,x,v)$ and $(x,w)\mapsto f(t,x,w)$ decay rapidly at infinity, while $(x,w)\mapsto \ln f(t,x,w)$ and $(x,v)\mapsto \ln F(t,x,v)$ have at most polynomial growth at infinity. Then, for the Boltzmann--Enskog collision integrals ($Y^{(1,1)}=Y^{(0,2)}=Y^{(2,0)}\equiv 1$), we have 
\begin{equation} 
    \dt S_{\mathrm{EE}}=-\Lambda\le 0,
\end{equation}
where $\Lambda$ is defined by
\begin{multline*}
\Lambda:=-\frac{1}{2m_g^2}\int_{(\R^3)^3\times\sph^2}f(t,x+2r_gn,w_1)\lambda\left(\frac{f(t,x,w^o)f(t,x+2r_gn,w_1^o)}{f(t,x,w)f(t,x+2r_gn,w_1)}\right)\\
    \times f(t,x,w)(w-w_1)\cdot nH((w-w_1)\cdot n)\dd  x\dd  n\dd  w\dd  w_1\\
    -\frac{1}{2m_p^2}\int_{(\R^3)^3\times\sph^2}F(t,x,v)F(t,x+2r_pn,v_1)\lambda\left(\frac{F(t,x,v^o)F(t,x+2r_pn,v_1^o)}{F(t,x,v)F(t,x+2r_pn,v_1)}\right)\\
    \times (v-v_1)\cdot nH((v-v_1)\cdot n)\dd  x\dd  n\dd  v\dd  v_1\\
    -\frac{1}{m_gm_p}\int_{(\R^3)^3\times\sph^2}f(t,x+(r_g+r_p)n,w)\lambda\left(\frac{f(t,x+(r_g+r_p)n,w')F(t,x,v')}{f(t,x+(r_g+r_p)n,w)F(t,x,v)}\right)\\
    \times F(t,x,v)(v-w)\cdot nH((v-w)\cdot n)\dd  x\dd  n\dd  v\dd  w,
\end{multline*}
where $v',w',v_*^o,v^o,w^o,w_*^o$ are given by \eqref{collision formula between same species}, \eqref{collision formula between different species}, and $\lambda(z):=z-1-\ln z$.
\end{proposition}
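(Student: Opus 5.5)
We follow the standard H-theorem argument for the Boltzmann--Enskog equation (Resibois' approach), adapted to the three collision channels. Differentiate the kinetic part of $S_{\mathrm{EE}}$ in time and use \eqref{coupled Enskog equations}. The transport terms $w\cdot\nabla_x(f\ln f)$ and $v\cdot\nabla_x(F\ln F)$ integrate to zero by the decay assumptions. The $\partial_t f$ term contributes $\int \partial_t f\,\dd x\dd w$, which vanishes by mass conservation. This leaves
\[
\dt\Bigl(\frac{1}{m_g}\!\int f\ln f+\frac{1}{m_p}\!\int F\ln F\Bigr)=\frac{1}{m_g}\!\int(\mathcal E_0[f]+\mathcal E_1[f,F])\ln f+\frac{1}{m_p}\!\int(\mathcal E_2[F,f]+\mathcal E_3[F])\ln F .
\]
We treat the self-collision pairs $(\mathcal E_0,f)$, $(\mathcal E_3,F)$ and the cross pair $(\mathcal E_1,\mathcal E_2)$ separately. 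For each we produce a weak form with integration over $x$ (not only velocities). Integrating in $x$ lets us translate $x\mapsto x\pm 2r n$ or $x\mapsto x\pm(r_g+r_p)n$ freely. Hence no flux term $\div_x I$ survives, unlike in Proposition \ref{prop:weak formula for enskog collision integral}.

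\textbf{Cross pair.} In the gain term of $\mathcal E_1$ we first substitute $n\to-n$ using \eqref{change the sign of n}, so that both operators are written with the gas molecule at $x+(r_g+r_p)n$ and the particle at $x$. The restriction $(v-w)\cdot n>0$ becomes a Heaviside factor. We then translate $x$ so that the gain and loss terms sample the same pair of centers. On the gain terms we perform the pre-to-post change $(v,w)\mapsto(v',w')$. By \eqref{revisible collision} and \eqref{jacobian of pre-to-post transformation} this has unit Jacobian, and by \eqref{specular reflection} it maps $\{(v-w)\cdot n>0\}$ onto $\{(v-w)\cdot n<0\}$ with $|(v-w)\cdot n|$ preserved. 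Combining with $n\to -n$ recovers the original half-space. Using the prefactors $(r_g+r_p)^2/(m_gm_p)$ coming from the two equations (after the $1/m_g$, $1/m_p$ weights), the sum becomes
\[
\frac{(r_g+r_p)^2}{m_gm_p}\int f(x+an,w)F(x,v)[(v-w)\cdot n]_+\,\ln\frac{f(x+an,w')F(x,v')}{f(x+an,w)F(x,v)},\qquad a=r_g+r_p .
\]
We write $\ln z=z-1-\lambda(z)$. The $-\lambda$ part gives the third term of $-\Lambda$. The remaining $z-1$ part equals
\[
\frac{a^2}{m_gm_p}\int [(v-w)\cdot n]_+\bigl(f(x+an,w')F(x,v')-f(x+an,w)F(x,v)\bigr).
\]
Undoing the velocity change in the first product turns $[\cdot]_+$ into $[\cdot]_-$, so this term equals
\[
-\frac{a^2}{m_gm_p}\int f(x+an,w)F(x,v)\,(v-w)\cdot n\,\dd x\dd n\dd v\dd w .
\]
This is not zero; it is the excluded-volume contribution.

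\textbf{Potential part.} Differentiate the cross potential term (reading the last line of \eqref{def:entropy for EE system} as coupling $\rho_g$ and $\rho_\ell$). By the continuity equations $\partial_t\rho_g=-\div_x(\rho_gu_g)$ and $\partial_t\rho_\ell=-\div_x(\rho_\ell u_\ell)$, and integration by parts, the derivative equals
\[
\frac{1}{m_gm_p}\int \beta_a'(|x-y|)\,\frac{x-y}{|x-y|}\cdot\bigl(\rho_gu_g(x)-\rho_\ell u_\ell(y)\bigr)\dd x\dd y .
\]
Since $\beta_a'=-\delta(\cdot-a)$, this reduces in polar coordinates $y=x\mp an$ to $a^2\int\rho_g\rho_\ell\,n\cdot(\text{relative flux})$. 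Writing fluxes as $\int wf$ and $\int vF$, it exactly cancels the residual term above. The self-species potentials cancel the analogous residuals from $\mathcal E_0$ and $\mathcal E_3$. There the symmetrization $(w,x)\leftrightarrow(w_1,x+2r_gn)$ together with $n\to-n$ produces the factor $\tfrac12$. Summing the three channels gives $\dt S_{\mathrm{EE}}=-\Lambda$. Each integrand of $\Lambda$ carries a minus sign, while $\lambda\ge0$ and $H((\cdot)\cdot n)(\cdot)\cdot n\ge0$. Hence the sign of the inequality follows once the orientation conventions in $\Lambda$ are checked.

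\textbf{Main obstacle.} The delicate step is the bookkeeping of shift directions and signs. The gain terms in \eqref{def:collision integral E_1} and \eqref{def:collision integral E_2} use opposite shifts. We must verify that after $n\to-n$ and the $x$-translation both reduce to the same configuration. We must also check that the residual $(z-1)$ term matches the potential derivative with the correct sign, including the prefactor $(r_g+r_p)^2$ and the convention for $\beta_r$ (a Heaviside function whose derivative is $-\delta$). I would first check this cancellation for the single-species $\mathcal E_0$ term against the known Resibois computation, then transfer it verbatim to the cross term.
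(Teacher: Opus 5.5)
Your argument is correct and is essentially the paper's proof. It uses the same split into the two self-collision channels and the cross channel, and the same reduction of $\frac{1}{m_g}\int\mathcal{E}_1\ln f+\frac{1}{m_p}\int\mathcal{E}_2\ln F$ to a single log-ratio integral via $n\mapsto -n$, translation in $x$ and the pre-to-post change. It also has the same cancellation of the excluded-volume residual $-\frac{(r_g+r_p)^2}{m_gm_p}\int (v-w)\cdot n\, f(x+(r_g+r_p)n,w)F(x,v)$ against the time derivative of the $\beta$-potential, via the continuity equations and $\beta_r'=-\delta(\cdot-r)$; you are right to read that potential as coupling $\rho_g$ and $\rho_\ell$.

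The one correction concerns your closing sign remark, which runs backwards. Your own computation gives $\dt S_{\mathrm{EE}}=-\frac{(2r_g)^2}{2m_g^2}\int\cdots-\frac{(2r_p)^2}{2m_p^2}\int\cdots-\frac{(r_g+r_p)^2}{m_gm_p}\int\cdots$ with nonnegative integrands. The printed $\Lambda$, with leading minus signs in front of nonnegative integrals, would give $-\Lambda\ge 0$ and also omits these geometric prefactors. So $\dt S_{\mathrm{EE}}=-\Lambda\le 0$ holds only once $\Lambda$ is read with plus signs and the factors $(2r_g)^2/2$, $(2r_p)^2/2$, $(r_g+r_p)^2$ restored. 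The paper's displayed $\Gamma_1,\Gamma_2,\Gamma_3$ carry the same sign slip.
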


This result is based on Theorem 3.4 in \cite{CCG2026}, which treats the entropy inequality for the single-species Boltzmann--Enskog equation. 
\begin{proof}\label{sec:proof of entropy inequality}
Similarly, we directly compute the time derivative of $S_{\mathrm{EE}}$. We decompose it into $3$ terms:
\[
\dt S_{\mathrm{EE}}=\Gamma_1+\Gamma_2+\Gamma_3,
\]
where
\begin{align*}
    \Gamma_1:=&\int_{\R^3\times\R^3}\mathcal{E}_0[f]\ln f(t,x,w)\dd  x\dd  w+\dt\left( \frac{1}{2m_g^2}\int_{\R^3}\beta_{2r_g}(|\cdot|)\star \rho_g \rho_g(t,x) \dd  x\right)\\
    \Gamma_2:=&\int_{\R^3\times\R^3}\mathcal{E}_3[F]\ln F(t,x,v)\dd  x\dd  v+\dt\left( \frac{1}{2m_p^2}\int_{\R^3}\beta_{2r_p}(|\cdot|)\star \rho_\ell \rho_\ell(t,x) \dd  x\right)\\
    \Gamma_3:=&\int_{\R^3\times \R^3}\mathcal{E}_1[f,F]\ln f\dd  x\dd  w+\int_{\R^3\times \R^3}\mathcal{E}_2[F,f]\ln F\dd  x\dd  v\\
    &\qquad +\frac{1}{m_gm_p}\dt \int_{\R^3\times \R^3}\beta_{r_g+r_p}(|x-y|)\rho_\ell(x)\rho_g(y)\dd  x\dd  y.
\end{align*}
We begin by considering $\Gamma_3$.
\begin{align*}
    &m_p\int_{\R^3\times \R^3}\mathcal{E}_1[f,F]\ln f\dd  x\dd  w+m_g\int_{\R^3\times \R^3}\mathcal{E}_2[F,f]\ln F\dd  x\dd  v\\
    =&(r_g+r_p)^2\int_{(v-w)\cdot n>0} (v-w)\cdot nf(x+(r_g+r_p)n,w)F(x,v)\\
    &\qquad\qquad\times\ln\frac{f(x+(r_g+r_p)n,w')F(x,v')}{f(x+(r_g+r_p)n,w)F(x,v)}\dd  x\dd  v\dd  w\dd  n. 
\end{align*}
By the following fundamental inequality
\begin{equation}\label{fundamental inequality}
\lambda(z):=z-1-\ln z\ge0, \text{for~}z>0 ,
\end{equation}
the quantity above is no larger than
\begin{align*}
    &(r_g+r_p)^2 \int_{(v-w)\cdot n>0}[f(x+(r_g+r_p)n,w')F(x,v')-f(x+(r_g+r_p)n,w)F(x,v)]\\
    &\qquad\qquad\qquad\qquad\qquad\qquad\qquad\qquad\qquad\qquad\qquad\qquad\times(v-w)\cdot n\dd  x\dd  v\dd  w\dd  n\\
    =&-(r_g+r_p)^2\int_{\R^3\times\R^3\times\R^3\times\sph^2}(v-w)\cdot n f(x+(r_g+r_p)n,w)F(x,v)\dd  x\dd  v\dd  w\dd  n\\
    =&\int_{(\R^3)^4}(v-w)\cdot \frac{x-y}{|x-y|} f(y,w)F(x,v)\delta(|x-y|-(r_g+r_p))\dd  x\dd  y\dd v\dd w\\
    =&\int_{(\R^3)^4}\rho_\ell(x)\rho_g(y)\frac{x-y}{|x-y|}\cdot [u_\ell(x)-u_g(y)]\delta(|x-y|-(r_g+r_p))\dd  x\dd  y\dd v\dd w.
\end{align*}
On the other hand, the equations \eqref{local conservation laws for EE system}, together with integration by parts, yield
\begin{align*}
    &\dt \int_{\R^3\times \R^3}\beta_{r_g+r_p}(|x-y|)\rho_\ell(x)\rho_g(y)\dd  x\dd  y\\
    =&-\int_{\R^3\times \R^3}\beta_{r_g+r_p}(|x-y|)\div_x(\rho_\ell(x)u_\ell(x))\rho_g(y)\dd  x\dd  y\\
    &\quad-\int_{\R^3\times \R^3}\beta_{r_g+r_p}(|x-y|)\rho_\ell(x)\div_y(\rho_g(y)u_g(y))\dd  x\dd  y\\
    =&\int_{\R^3\times \R^3}\rho_\ell(x)\rho_g(y)\{\nabla_x[\beta_{r_g+r_p}(|x-y|)]\cdot u_\ell(x)+\nabla_y[\beta_{r_g+r_p}(|x-y|)]\cdot u_g(y)\}\dd  x\dd  y\\
    =&\int_{\R^3\times \R^3}\rho_\ell(x)\rho_g(y)\frac{x-y}{|x-y|}\cdot [u_\ell(x)-u_g(y)]\beta_{r_g+r_p}'(|x-y|)\dd  x\dd  y.
\end{align*}
Recall that $\beta_s'(r)=-\delta(r-s)$. Therefore, we can rewrite the $\Gamma_3$ term as
 \begin{multline}\label{cpt:Gamma_3}
    \Gamma_3=\frac{(r_g+r_p)^2}{m_gm_p}\int_{(\R^3)^3\times\sph^2}f(t,x+(r_g+r_p)n,w)\lambda\left(\frac{f(t,x+(r_g+r_p)n,w')F(t,x,v')}{f(t,x+(r_g+r_p)n,w)F(t,x,v)}\right)\\
    \times F(t,x,v)(v-w)\cdot nH((v-w)\cdot n)\dd  x\dd  n\dd  v\dd  w\le 0.
\end{multline} 
For the same reason, or by Theorem 3.4 in \cite{CCG2026}, we have the following estimates:
\begin{multline}\label{cpt:Gamma_1}
    \Gamma_1=\frac{2r_g^2}{m_g^2}\int_{(\R^3)^3\times\sph^2}f(t,x,w)f(t,x+2r_gn,w_1)\lambda\left(\frac{f(t,x,w^o)f(t,x+2r_gn,w_1^o)}{f(t,x,w)f(t,x+2r_gn,w_1)}\right)\\
    (w-w_1)\cdot nH((w-w_1)\cdot n)\dd  x\dd  n\dd  w\dd  w_1\le0,
\end{multline}
    and
 \begin{multline}\label{cpt:Gamma_2}
    \Gamma_2=\frac{2r_p^2}{m_p^2}\int_{(\R^3)^3\times\sph^2}F(t,x,v)F(t,x+2r_pn,v_1)\lambda\left(\frac{F(t,x,v^o)F(t,x+2r_pn,v_1^o)}{F(t,x,v)F(t,x+2r_pn,v_1)}\right)\\
    (v-v_1)\cdot nH((v-v_1)\cdot n)\dd  x\dd  n\dd  v\dd  v_1\le0.
\end{multline}  
Finally, we conclude the result by combining \eqref{cpt:Gamma_1}, \eqref{cpt:Gamma_2}, and \eqref{cpt:Gamma_3}.
\end{proof}

From this proposition, we observe that the entropy of the Enskog--Enskog system is not merely the sum of the individual entropies of each species, as in the case of the coupled Boltzmann equations, but also includes additional cross-interaction terms.

\section{Formal derivation of a kinetic equation with a delocalized collision integral}\label{sec:derive coupled kinetic equations}
In this section, we aim to derive the coupled Enskog equations \eqref{coupled Enskog equations}. The technique is based on Grad's approach for deriving the Boltzmann equation for a single species. For a formal derivation, see, for instance, Chapter 2 of \cite{Cercignani1994} or Appendix A of \cite{Sone2007}. 
A rigorous short-time proof can be found in \cite{Lanford1975}, and long-time result was obtained in \cite{DHM2026}.
We begin with Newtonian mechanics. The system consists of $N_p$ “heavy” particles and $N_g$ “light” gas molecules, with positions $X_i\equiv X_i(t), Y_k\equiv Y_k(t) \in \mathbb{R}^3$ and velocities $V_i\equiv V_i(t), W_k\equiv W_k(t) \in \mathbb{R}^3$, for $1 \le i \le N_p$, $1 \le k \le N_g$. Let $r_p$ and $r_g$ denote the radii of the particles and gas molecules, respectively. For this $(N_p+N_g)$-particle system, which consists of two species, we denote the full state vector by

\[
Z(t) := (X_1,\dots,X_{N_p},Y_1,\dots,Y_{N_g},V_1,\dots,V_{N_p},W_1,\dots,W_{N_g}) \in \mathbb{R}^{6(N_p+N_g)}.
\]

The evolution of the system is governed by Hamilton's equations:

\begin{equation}\label{Hamilton equations}
\left\{
\begin{aligned}
&\frac{\mathrm{d} X_i}{\mathrm{d} t} = \frac{\partial \mathcal{H}}{\partial V_i}, \quad \frac{\mathrm{d} Y_k}{\mathrm{d} t} = \frac{\partial \mathcal{H}}{\partial W_k}, \\
&\frac{\mathrm{d} V_i}{\mathrm{d} t} = -\frac{\partial \mathcal{H}}{\partial X_i}, \quad \frac{\mathrm{d} W_k}{\mathrm{d} t} = -\frac{\partial \mathcal{H}}{\partial Y_k},
\end{aligned}
\right.
\qquad \text{for } 1 \le i \le N_p,\ 1 \le k \le N_g,  Z\in \Omega,
\end{equation}
where the Hamiltonian $\mathcal{H}$ is given by
\begin{equation*}
\mathcal{H}(t,Z(t))=\sum_{i=1}^{N_p}\frac{m_p|V_i(t)|^2}{2}+\sum_{k=1}^{N_g}\frac{m_g|W_k(t)|^2}{2}.
\end{equation*}
The admissible phase space domain $\Omega$ is defined as
\[
\Omega := \left\{ Z \in \mathbb{R}^{6(N_p+N_g)} : |X_i - X_j| \ge 2r_p,\ |X_i - Y_k| \ge r_p + r_g,\ |Y_k - Y_l| \ge 2r_g \right\}.
\]

We then define the flow map $S_t$ associated with the Hamiltonian dynamics as
\[
S_t(X,Y,V,W):=Z(t),
\]
where $Z(t)$ is given by \eqref{Hamilton equations}, and $X,V\in\R^{3N_p}, Y,W\in \R^{3N_g}$. Let ${P}={P}(t,Z(t))$ denote the joint distribution function. By Liouville's theorem, the flow $S_t$ preserves volume in phase space, meaning that the Jacobian determinant of $S_t$ is equal to $1$ for all $t$. Consequently, the distribution function satisfies the identity
\[
{P}(t,S_t(X,Y,V,W))={P}(0,X,Y,V,W).
\]
Applying the chain rule to the total time derivative, we obtain
\begin{multline*}
    0=\frac{\dd  }{\dd  t} P(t,Z(t))=\frac{\p}{\p t} P(t,Z(t))+\frac{\dd  X(t)}{\dd  t}\cdot \frac{\p}{\p X}P(t,Z(t))+ \frac{\dd  Y(t)}{\dd  t}\cdot \frac{\p}{\p Y}P(t,Z(t))\\
    +\frac{\dd  V(t)}{\dd  t}\cdot \frac{\p}{\p V}P(t,Z(t))+ \frac{\dd  W(t)}{\dd  t}\cdot \frac{\p}{\p W}P(t,Z(t)).
\end{multline*}
Combining with the Hamilton equations \eqref{Hamilton equations}, we arrive at the Liouville equation:
\begin{equation*}
    \p_t P(t,Z(t))+V \cdot \nabla_XP(t,Z(t))+ W\cdot \nabla_YP(t,Z(t))=0,\qquad \text{for  }Z\in \Omega,
\end{equation*}
which governs the evolution of the joint distribution function under the deterministic particle dynamics.

We now consider the boundary conditions imposed by elastic collisions between particles and gas molecules. Suppose the $i$-th particle collides with the $k$-th gas molecule. On the collision surface $|X_i-Y_k|=r_g+r_p$, the post-collision velocities are $V_i$ and $W_k$, while the pre-collision velocities $V_i'$ and $W_k'$ must satisfy the conservation of momentum and kinetic energy:
\begin{equation}\label{collision conserves momentum and energy}\left\{
\begin{aligned}
&m_gW_k'+m_pV_i'=m_gW_k+m_pV_i,\\
&m_g|W_k'|^2+m_p|V_i'|^2=m_g|W_k|^2+m_p|V_i|^2.
\end{aligned}\right.
\end{equation}
Additionally, since $V_i'-V_i$ and $W_k'-W_k$ are parallel to $\frac{X_i-Y_k}{|X_i-Y_k|}$, we obtain
\[
(W_k+V_i-W_k'-V_i')\cdot\frac{X_i-Y_k}{|X_i-Y_k|}=0.
\]
These constraints uniquely determine the pre-collision velocities, yielding the collision transformation on contact sphere $|X_i-Y_k|=r_g+r_p$ :
\begin{equation}\label{collision formula with mass ratio}
\begin{cases} 
  V_i'\equiv V_i'(V_i,W_k,\frac{X_i-Y_k}{|X_i-Y_k|},\eta)=V_i-\frac{2\eta}{1+\eta}(V_i-W_k)\cdot\frac{X_i-Y_k}{|X_i-Y_k|}\frac{X_i-Y_k}{|X_i-Y_k|},\\
  W_k'\equiv W_k'(V_i,W_k,\frac{X_i-Y_k}{|X_i-Y_k|},\eta)=W_k+\frac{2}{1+\eta}(V_i-W_k)\cdot \frac{X_i-Y_k}{|X_i-Y_k|}\frac{X_i-Y_k}{|X_i-Y_k|},
\end{cases}
\end{equation}
where $\eta:=\frac{m_g}{{m_p}}\le 1$ is the mass ratio. Similarly, for collisions between two particles of the same species, say particles $i$ and $j$, the pre-collision velocities $V_i^o$ and $V_j^o$ are given by
\begin{equation}\label{collision formula without mass ratio}
\begin{cases} 
  V_i^o\equiv V_i^o(V_i,V_j,\frac{X_i-X_j}{|X_i-X_j|})=V_i-(V_i-V_j)\cdot \frac{X_i-X_j}{|X_i-X_j|} \frac{X_i-X_j}{|X_i-X_j|}\delta(|X_i-X_j|-2r_p),\\
  V_j^0\equiv V_j^o(V_i,V_j,\frac{X_i-X_j}{|X_i-X_j|})=V_i+(V_i-V_j)\cdot \frac{X_i-X_j}{|X_i-X_j|} \frac{X_i-X_j}{|X_i-X_j|}\delta(|X_i-X_j|-2r_p).
\end{cases}
\end{equation}

Let $ X_{(N_p)} = (x_1, \dots, x_{N_p}) \in \mathbb{R}^{3N_p} $ and $ V_{(N_p)} = (v_1, \dots, v_{N_p}) \in \mathbb{R}^{3N_p} $ denote the position and velocity vectors of the particles. Similarly, let $ Y_{(N_g)} $ and $ W_{(N_g)} $ represent the positions and velocities of the gas molecules.

The joint velocity distribution function is defined as \[ \tilde{P}^{(N_p,N_g)} \equiv \tilde{P}^{(N_p,N_g)} (t, X_{(N_p)}, Y_{(N_g)}, V_{(N_p)}, W_{(N_g)}),\qquad \mathrm{in}~ \R_+\times\onm\times\R^{3(N_p+N_g)},\]
where the admissible domain $\onm$, which excludes overlaps between any two spheres, is defined by

\begin{multline*}
\Omega^{(N_p,N_g)}:=\{(X_{(N_p)},Y_{(N_g)}):|x_i-x_j|\ge 2r_p,|x_i-y_k|\ge r_p+r_g, |y_k-y_l|\ge 2r_g\\
\forall~i,j,k,l,~\mathrm{such~ that}~1\le i\neq j\le N_p,~1\le k\neq l\le N_g\}.
\end{multline*}
The distribution function $\tilde{P}^{(N_p,N_g)}$ evolves according to the Liouville equation:
\begin{multline}\label{liouville equation}
    \p_t\tilde{P}^{(N_p,N_g)} +\sum_{i=1}^{N_p}v_i\cdot\nabla_{x_i}\tilde{P}^{(N_p,N_g)} +\sum_{j=1}^{N_g}w_j\cdot\nabla_{y_j}\tilde{P}^{(N_p,N_g)} =0\\ \qquad\mathrm{in}~\onm\times\R^{3({N_p}+{N_g})}
\end{multline}
with the boundary conditions
\begin{equation}\label{boundary condition for g}
\left\{
\begin{array}{ll}
  \tilde{P}^{(N_p,N_g)} (t, X_{(N_p)}, Y_{(N_g)}, V_{(N_p)}, W_{(N_g)})=\tilde{P}^{(N_p,N_g)} (t, X_{(N_p)}, Y_{(N_g)}, R_{ij}V_{(N_p)}, W_{(N_g)}), &\\
  \multicolumn{2}{r}{\mathrm{on} \, \{(X_{(N_p)}, Y_{(N_g)}, V_{(N_p)}, W_{(N_g)}):(x_i-x_j)\cdot(v_i-v_j)>0, |x_i-x_j|=2r_p\}};\\
  \tilde{P}^{(N_p,N_g)} (t, X_{(N_p)}, Y_{(N_g)}, V_{(N_p)}, W_{(N_g)})=\tilde{P}^{(N_p,N_g)} (t, X_{(N_p)}, Y_{(N_g)}, C_{ij}(V_{(N_p)}, W_{(N_g)})), &\\
  \multicolumn{2}{r}{\qquad\qquad\mathrm{on} \, \{(X_{(N_p)}, Y_{(N_g)}, V_{(N_p)}, W_{(N_g)}):(x_i-y_j)\cdot(v_i-w_j)>0,|x_i-y_j|=r_p+r_g\}};\\
  \tilde{P}^{(N_p,N_g)} (t, X_{(N_p)}, Y_{(N_g)}, V_{(N_p)}, W_{(N_g)})=\tilde{P}^{(N_p,N_g)} (t, X_{(N_p)}, Y_{(N_g)}, V_{(N_p)}, R_{ij}W_{(N_g)}), &\\
  \multicolumn{2}{r}{\mathrm{on} \, \{(X_{(N_p)}, Y_{(N_g)}, V_{(N_p)}, W_{(N_g)}):(y_i-y_j)\cdot(w_i-w_j)>0,|y_i-y_j|=2r_g\}},
\end{array}
\right.
\end{equation}
where the reflection operators $R_{ij}$ and $C_{ij}$ are defined as

\[
\left\{
\begin{aligned}
R_{ij}(V_{(N_p)}) &\coloneqq \left(v_1, \dots, v_{i-1}, 
v_i^o\left(v_i, v_j, \frac{x_i - x_j}{|x_i - x_j|} \right), 
v_{i+1}, \dots, v_{j-1}, \right. \\
&\quad \left. v_j^o\left(v_i, v_j, \frac{x_i - x_j}{|x_i - x_j|} \right), 
v_{j+1}, \dots, v_{N_p} \right), 
\quad 1 \le i < j \le N_p, \\[1ex]
C_{ij}(V_{(N_p)}, W_{(N_g)}) &\coloneqq \left(v_1, \dots, 
v_i'\left(v_i, w_j, \frac{x_i - y_j}{|x_i - y_j|}, \eta \right), 
\dots, v_{N_p}, w_1, \dots ,\right. \\
&\quad \left. w_j'\left(v_i, w_j, \frac{x_i - y_j}{|x_i - y_j|}, \eta \right), 
\dots, w_{N_g} \right), \quad 1 \le i \le N_p, \; 1 \le j \le N_g, \\[1ex]
R_{ij}(W_{(N_g)}) &\coloneqq \left(w_1, \dots, w_{i-1}, 
w_i^o\left(w_i, w_j, \frac{y_i - y_j}{|y_i - y_j|} \right), 
w_{i+1}, \dots, w_{j-1} ,\right. \\
&\quad \left. w_j^o\left(w_i, w_j, \frac{y_i - y_j}{|y_i - y_j|} \right), 
w_{j+1}, \dots, w_{N_g} \right), 
\quad 1 \le i < j \le N_g,
\end{aligned}
\right.
\]
where the pre-collision velocities are given by \eqref{collision formula between same species} and \eqref{collision formula between different species}.

We now clarify this boundary condition. Consider a collision between particle $i$ and gas molecule $k$. The pre-collision configuration corresponds to the particles approaching each other, which is characterized by the condition:
\[
(x_i - y_k) \cdot (v_i' - w_k') < 0.
\]
Conversely, after the collision, the relative velocity should be directed outward, indicating that the particles are moving apart:
\[
(x_i - y_k) \cdot (v_i - w_k) > 0.
\]
Assuming sufficient regularity of the distribution function, it is reasonable to impose the boundary condition on the interaction sphere $\{|x_i - y_k| = r_p + r_g\}$ in the following form:
\begin{multline*}
    P(t, X_{(N_p)}, Y_{(N_g)}, V_{(N_p)}, W_{(N_g)}) H\big(- (x_i - y_k) \cdot (v_i' - w_k')\big)  \\
    = P(t, X_{(N_p)}, Y_{(N_g)}, C_{ik}(V_{(N_p)}, W_{(N_g)})) H\big((x_i - y_k) \cdot (v_i - w_k)\big) .
\end{multline*}

By the collision property \eqref{specular reflection}, this boundary condition is naturally imposed on the outgoing hemisphere:
\[
\left\{ |x_i - y_k| = r_p + r_g,\quad (x_i - y_k) \cdot (v_i - w_k) > 0 \right\}.
\]
See Figure \ref{fig:collision between different species} for an illustration:

\begin{figure}[htbp]
  \centering
  \includegraphics[width=0.8\textwidth]{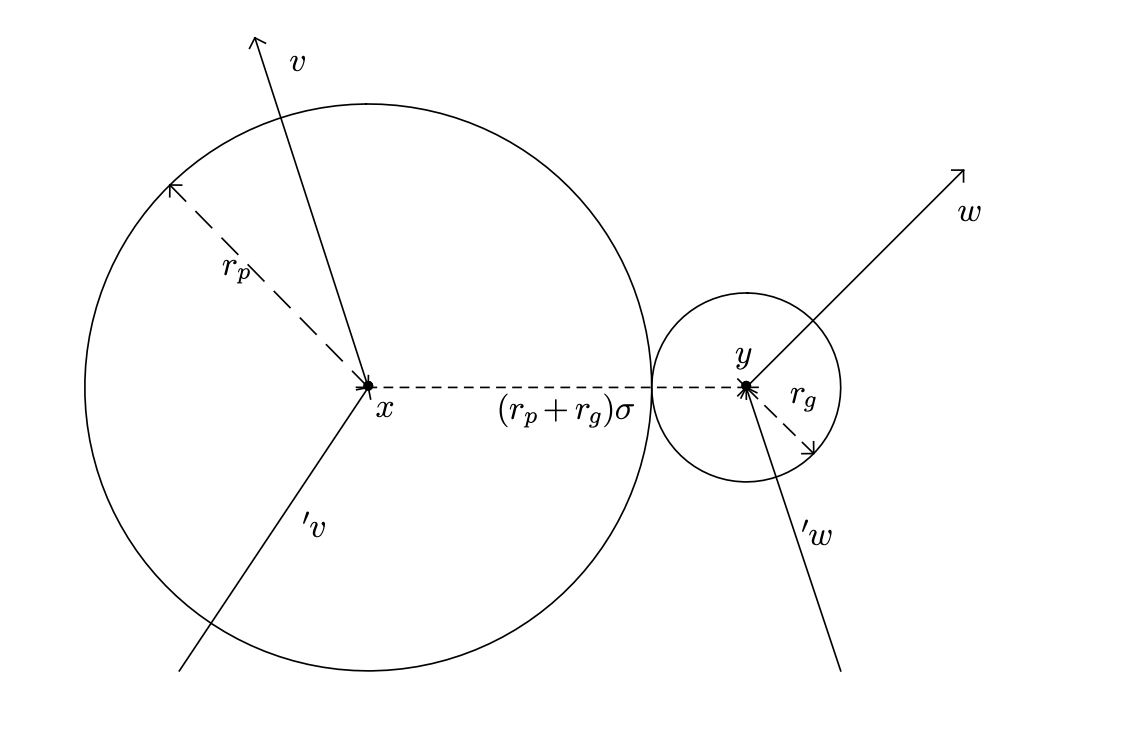}
  \caption{Collision Between Two Dissimilar Particles}
  \label{fig:collision between different species}
\end{figure}

We symmetrize the distribution function $\tilde{P}^{(N_p,N_g)}$ by
\begin{multline}\label{definition of g bar}
    \bar{P}^{(N_p,N_g)}:=\frac{1}{N_p!N_g!}\sum_{\sigma,\tau}\tilde{P}^{(N_p,N_g)}(t,x_{\sigma(1)},\dots,x_{\sigma(N_p)},y_{\tau(1)},\dots,y_{\tau(N_g)},\\
    v_{\sigma(1)},\dots,v_{\sigma(N_p)},w_{\tau(1)},\dots,w_{\tau(N_g)}),
\end{multline}
where $\sigma$ and $\tau$ are permutations in $S_{N_p}$ and $S_{N_g}$, respectively. Hence, the function $\bar{P}^{(N_p,N_g)}$ is permutation invariant, i.e.,
\begin{multline}\label{symmetry of g}
\bar{P}^{(N_p, N_g)}(t, X_{(N_p)}, Y_{(N_g)}, V_{(N_p)}, W_{(N_g)})= 
\bar{P}^{(N_p,N_g)}(t,x_{\sigma(1)},\dots,x_{\sigma(N_p)}\\
,y_{\tau(1)},\dots,y_{\tau(N_g)},v_{\sigma(1)},\dots,v_{\sigma(N_p)},w_{\tau(1)},\dots,w_{\tau(N_g)}),
\end{multline}  
ensuring indistinguishability of molecules within the same species.
Next, we extend $\bar{P}^{(N_p,N_g)}$ to $\R_+\times\R^{6(N_p+N_g)}$ by
\begin{multline}
\gnm(t,X_{(N_p)},Y_{(N_g)}):=\bar{P}^{(N_p,N_g)}(t,X_{(N_p)},Y_{(N_g)})\mathds{1}_{\onm}(X_{(N_p)},Y_{(N_g)})\\=\begin{cases}
  \bar{P}^{(N_p,N_g)},\quad \mathrm{if}~(X_{(N_p)},Y_{(N_g)})\in \onm\\
  0,\qquad\qquad \mathrm{if}~(X_{(N_p)},Y_{(N_g)})\notin \onm
\end{cases}.
\end{multline}
We remake here that $\bar{P}$ still satisfies the boundary condition \eqref{boundary condition for g}.
We denote the variables by $X_{(s)} = (x_1, \dots, x_s) \in \R^{3s}$, $Y_{(l)} = (y_1, \dots, y_l) \in \R^{3l}$, $V_{(s)} = (v_1, \dots, v_s) \in \R^{3s}$, and $W_{(l)} = (w_1, \dots, w_l) \in \R^{3l}$. Similarly, the complementary variables are defined as $ X_{(N_p)}^{(s)} := (x_{s+1}, \dots, x_{N_p}) \in \R^{3(N_p - s)}$, $ Y_{(N_g)}^{(l)} := (y_{l+1}, \dots, y_{N_g}) \in \R^{3(N_g - l)}$, $ V_{(N_p)}^{(s)} := (v_{s+1}, \dots, v_{N_p}) \in \R^{3(N_p - s)}$, and $ W_{(N_g)}^{(l)} := (w_{l+1}, \dots, w_{N_g}) \in \R^{3(N_g - l)}$.
We then introduce the marginals of $\gnm$:
\begin{equation}\label{definition of marginal}
    P^{(s,l)}(t,X_{(s)},Y_{(l)},V_{(s)},W_{(l)}):=\int_{\R^{6(N_p+N_g-s-l)}}\gnm\dd  X^{(s)}_{(N_p)}\dd  Y^{(l)}_{(N_g)}\dd  V^{(s)}_{(N_p)}\dd  W^{(l)}_{(N_g)}.
\end{equation}

To study the dynamics of the first marginal for both species, we integrate \eqref{liouville equation} over the admissible domain for a single molecule:
\begin{align*}
\onm_{(1,0)}:=\{&(x_2,\dots,x_{N_p},Y_{(N_g)})\in\R^{3({N_p}+{N_g}-1)}:|x_i-x_1|\ge 2r_p,|x_1-y_j|\ge r_p+r_g,\\
&|x_i-x_j|\ge2r_p,|y_j-y_l|\ge 2r_g,|x_i-y_j|\ge r_g+r_p\\
&\forall 2\le i\neq j\le {N_p},1\le k\neq l\le {N_g}\}\\
\onm_{(0,1)}:=\{&(X_{(N_p)},y_2,\dots,y_{(N_p)})\in\R^{3({N_p}+{N_g}-1)}:|x_i-x_k|\ge 2r_p,|x_i-y_1|\ge r_p+r_g,\\
&|x_i-x_j|\ge2r_p,|y_j-y_l|\ge 2r_g,|x_i-y_j|\ge r_g+r_p\\
&\forall 2\le i\neq j\le {N_p},1\le k\neq l\le {N_g}\}.
\end{align*}
The equation for the particle marginal is obtained from \eqref{liouville equation} as follows:
\begin{multline}\label{cpt:integrate the first marginal for particles}
\int_{\onm_{(1,0)}\times\R^{3(N_p+N_g-1)}}\bigg(\p_tP^{(N_p,N_g)} +\sum_{i=1}^{N_p}v_i\cdot\nabla_{x_i}P^{(N_p,N_g)}\\ +\sum_{j=1}^{N_g}w_j\cdot\nabla_{y_j}P^{(N_p,N_g)} \bigg)\dd  X^{(2)}_{(N_p)}\dd  Y_{(N_g)}\dd  V^{(2)}_{(N_p)}\dd  W_{(N_g)}=0.
\end{multline}
If we assume $\p_t\gnm\in L^1(\R_+\times\R^{6(N_p+N_g)})$, the definition of the marginals \eqref{definition of marginal} gives
\begin{equation}\label{cpt:p_tg}
\int_{\onm_{(1,0)}\times\R^{3(N_p+N_g-1)}}\p_tP^{(N_p,N_g)}\dd  X^{(2)}_{(N_p)}\dd  Y_{(N_g)}\dd  V^{(2)}_{(N_p)}\dd  W_{(N_g)}=\p_tP^{(1,0)}.
\end{equation}
If $v_1\cdot\nabla_{x_1}\gnm\in L^1(\R_+\times\R^{6(N_p+N_g)})$, we change the order of integration. Because of the boundary, this gives
\begin{multline*}
\int_{\onm_{(1,0)}\times\R^{3(N_p+N_g-1)}}v_1\cdot\nabla_{x_1}\gnm\dd  X^{(2)}_{(N_p)}\dd  Y_{(N_g)}\dd  V^{(2)}_{(N_p)}\dd  W_{(N_g)}=v_1\cdot\nabla_{x_1}P^{(1,0)}\\
-\sum_{j=2}^{N_p}\int_{\onm_{(1,0)}\times\R^{3(N_p+N_g-1)}}v_1\cdot\frac{x_1-x_j}{|x_1-x_j|}\delta(|x_1-x_j|-2r_p)\gnm\dd  X^{(2)}_{(N_p)}\dd  Y_{(N_g)}\dd  V^{(2)}_{(N_p)}\dd  W_{(N_g)}\\
-\sum_{k=1}^{N_g}\int_{\onm_{(1,0)}\times\R^{3(N_p+N_g-1)}}v_1\cdot\frac{x_1-y_k}{|x_1-y_k|}\delta(|x_1-y_k|-r_g-r_p)\gnm\dd  X^{(2)}_{(N_p)}\dd  Y_{(N_g)}\dd  V^{(2)}_{(N_p)}\dd  W_{(N_g)}.
\end{multline*}
Therefore, the symmetry property \eqref{symmetry of g} allows us to simplify the integral by replacing all indices $ 2 \le j \le N_p $ with $ 2 $, and $ 1 \le k \le N_g $ with $ 1 $, because of the assumed symmetry within each species.
\begin{multline}\label{cpt:v_1cdotnabla_x_1g}
\int_{\onm_{(1,0)}\times\R^{3(N_p+N_g-1)}}v_1\cdot\nabla_{x_1}\gnm\dd  X^{(2)}_{(N_p)}\dd  Y_{(N_g)}\dd  V^{(2)}_{(N_p)}\dd  W_{(N_g)}\\
=v_1\cdot\nabla_{x_1}P^{(1,0)}
-(N_p-1)\int_{\R^{6}}v_1\cdot\frac{x_1-x_2}{|x_1-x_2|}\delta(|x_1-x_2|-2r_p)P^{(2,0)}\dd  x_2\dd  v_2\\
-N_g\int_{\R^{6}}v_1\cdot\frac{x_1-y_1}{|x_1-y_1|}\delta(|x_1-y_1|-r_g-r_p)P^{(1,1)}\dd  y_1\dd  w_1.
\end{multline}

For $2\le i \le N_p$, we apply Gauss's theorem to the boundary, which consists of three parts:
\begin{itemize}
    \item A sphere centered at the unintegrated variable $x_1$: $\{(X_{(N_p)},Y_{(N_p)}):|x_i-x_1|=2r_p\}$;
    \item Spheres centered at the integrated variables $x_j$ for $2\le j\neq i\le N_p$: $\{(X_{(N_p)},Y_{(N_p)}):|x_i-x_1|=2r_p\}$;
    \item Spheres centered at the other species $y_k$ for $1\le k\le N_g$: $\{(X_{(N_p)},Y_{(N_p)}):|x_i-y_k|=r_g+r_p\}$.
\end{itemize}
\begin{multline*}
\int_{\onm_{(1,0)}\times\R^{3(N_p+N_g-1)}}v_i\cdot\nabla_{x_i}P^{(N_p,N_g)}\dd  X^{(2)}_{(N_p)}\dd  Y_{(N_g)}\dd  V^{(2)}_{(N_p)}\dd  W_{(N_g)}\\
=-\int_{\onm_{(1,0)}\times\R^{3(N_p+N_g-1)}}v_i\cdot\frac{x_i-x_1}{|x_i-x_1|}P^{(N_p,N_g)}\delta(|x_i-x_1|-2r_p)\dd  X^{(2)}_{(N_p)}\dd  Y_{(N_g)}\dd  V^{(2)}_{(N_p)}\dd  W_{(N_g)}\\
-\sum_{2\le j\neq i\le N_p}\int_{\onm_{(1,0)}\times\R^{3(N_p+N_g-1)}}v_i\cdot\frac{x_i-x_j}{|x_i-x_j|}P^{(N_p,N_g)}\delta(|x_i-x_j|-2r_p)\dd  X^{(2)}_{(N_p)}\dd  Y_{(N_g)}\dd  V^{(2)}_{(N_p)}\dd  W_{(N_g)}\\
-\sum_{1\le k\le N_g}\int_{\onm_{(1,0)}\times\R^{3(N_p+N_g-1)}}v_i\cdot\frac{x_i-y_k}{|x_i-y_k|}P^{(N_p,N_g)}\delta(|x_i-y_k|-r_g-r_p)\dd  X^{(2)}_{(N_p)}\dd  Y_{(N_g)}\dd  V^{(2)}_{(N_p)}\dd  W_{(N_g)}.
\end{multline*}
Thanks to the symmetry of the distribution function \eqref{symmetry of g}, we can, without loss of generality, relabel the indices in the integrals and summations: replace $i$ by $2$ in the first integral, replace each pair $ (i,j) $ with $ (2,3) $ for $ 2 \le i \neq j \le N_p $ in the first summation, and replace each pair $ (i,k) $ with $ (2,1) $ in the second summation. This leads to the following simplified expression:
\begin{multline*}
\int_{\onm_{(1,0)}\times\R^{3(N_p+N_g-1)}}v_i\cdot\nabla_{x_i}P^{(N_p,N_g)}\dd  X^{(2)}_{(N_p)}\dd  Y_{(N_g)}\dd  V^{(2)}_{(N_p)}\dd  W_{(N_g)}\\
   = -\int_{\R^6}v_2\cdot\frac{x_2-x_1}{|x_2-x_1|}P^{(2,0)}(t,x_1,x_2,v_1,v_2)\delta(|x_1-x_2|-2r_p)\dd  x_2\dd  v_2\\
    -(N_p-2)\int_{\R^{12}}v_2\cdot\frac{x_2-x_3}{|x_2-x_3|}P^{(3,0)}(t,x_1,x_2,x_3,v_1,v_2,v_3)\delta(|x_2-x_3|-2r_p)\dd  x_2\dd  x_3\dd  v_2\dd  v_3\\
    -N_g\int_{\R^{12}}v_2\cdot\frac{x_2-y_1}{|x_2-y_1|}P^{(2,1)}(t,x_1,x_2,y_1,v_1,v_2,w_1)\delta(|x_2-y_1|-r_g-r_p)\dd  y_1\dd  x_2\dd  w_1 \dd  v_2.
\end{multline*}
We then sum the above formula over all $2\le i\le N_p$, which yields
\begin{multline}\label{cpt:v_icdotnabla_x_ig}
  \sum_{i=2}^{N_p}\int_{\onm_{(1,0)}\times\R^{3(N_p+N_g-1)}}v_i\cdot\nabla_{x_i}P^{(N_p,N_g)}\dd  X^{(2)}_{(N_p)}\dd  Y_{(N_g)}\dd  V^{(2)}_{(N_p)}\dd  W_{(N_g)}\\
    =(N_p-1)\int_{\R^6}v_2\cdot\frac{x_2-x_1}{|x_2-x_1|}P^{(2,0)}\dd  x_2\dd  v_2\\
    +(N_p-1)(N_p-2)\int_{\R^{12}}v_2\cdot\frac{x_2-x_3}{|x_2-x_3|}P^{(3,0)}\delta(|x_2-x_3|-2r_p)\dd  x_2\dd  x_3\dd  v_2\dd  v_3\\
    +(N_p-1)N_g\int_{\R^{12}}v_2\cdot\frac{x_2-y_1}{|x_2-y_1|}P^{(2,1)}\delta(|x_2-y_1|-r_g-r_p)\dd  y_1\dd  x_2\dd  w_1 \dd  v_2.
\end{multline}
For $1\le k\le N_g$, we compute the following integral using Gauss's theorem on the boundary 
\[
\bigcup_{1\le i\le N_p}\{y_k\in\R^3:|y_k-x_i|=r_g+r_p \}\bigcup_{1\le l\neq k\le N_g} \{y_k\in\R^3:|y_k-y_l|=2r_g\},
\] then we have
\begin{multline*}
    \int_{\onm_{(1,0)}\times\R^{3(N_p+N_g-1)}}w_k\cdot\nabla_{y_k}P^{(N_p,N_g)}\dd  X^{(2)}_{(N_p)}\dd  Y_{(N_g)}\dd  V^{(2)}_{(N_p)}\dd  W_{(N_g)}\\
=\int_{\onm_{(1,0)}\times\R^{3(N_p+N_g-1)}}w_k\cdot\frac{y_k-x_1}{|y_k-x_1|}P^{(N_p,N_g)}\delta(|y_k-x_1|-r_g-r_p)\\
\times\dd  X^{(2)}_{(N_p)}\dd  Y_{(N_g)}\dd  V^{(2)}_{(N_p)}\dd  W_{(N_g)}\\
+\sum_{2\le i\le N_p}\int_{\onm_{(1,0)}\times\R^{3(N_p+N_g-1)}}w_k\cdot\frac{y_k-x_i}{|y_k-x_i|}P^{(N_p,N_g)}\delta(|y_k-x_i|-r_g-r_p)\\
\times\dd  X^{(2)}_{(N_p)}\dd  Y_{(N_g)}\dd  V^{(2)}_{(N_p)}\dd  W_{(N_g)}\\
+\sum_{1\le l\neq k\le N_g}\int_{\onm_{(1,0)}\times\R^{3(N_p+N_g-1)}}w_k\cdot\frac{y_k-y_l}{|y_k-y_l|}P^{(N_p,N_g)}\delta(|y_k-y_l|-2r_g)\\
\times\dd  X^{(2)}_{(N_p)}\dd  Y_{(N_g)}\dd  V^{(2)}_{(N_p)}\dd  W_{(N_g)}.
\end{multline*}
The symmetry of $\bar{P}^{(N_p,N_g)}$ and the definition of the marginals give:
\begin{multline}\label{cpt:w_kcdotnabla_y_kg}
    \int_{\onm_{(1,0)}\times\R^{3(N_p+N_g-1)}}w_k\cdot\nabla_{y_k}P^{(N_p,N_g)}\dd  X^{(2)}_{(N_p)}\dd  Y_{(N_g)}\dd  V^{(2)}_{(N_p)}\dd  W_{(N_g)}\\
=-\int_{\R^6}w_1\cdot\frac{y_1-x_1}{|y_1-x_1|}P^{(1,1)}\delta(|y_1-x_1|-r_g-r_p)\dd  y_1\dd  w_1\\
-(N_p-1)\int_{\R^{12}}w_1\cdot\frac{y_1-x_2}{|y_1-x_2|}P^{(2,1)}\delta(|y_1-x_2|-r_g-r_p)\dd  y_1\dd  x_2\dd  w_1\dd  v_2\\
-(N_g-1)\int_{\R^{12}}w_1\cdot\frac{y_1-y_2}{|y_1-y_2|}P^{(1,2)}\delta(|y_1-y_2|-2r_g)\dd  y_1\dd  y_2\dd  w_1\dd  w_2.
\end{multline}

Substituting \eqref{cpt:p_tg}, \eqref{cpt:v_1cdotnabla_x_1g}, \eqref{cpt:v_icdotnabla_x_ig}, and summing \eqref{cpt:w_kcdotnabla_y_kg} over $k$ into \eqref{cpt:integrate the first marginal for particles}, we obtain:
\begin{multline}\label{cpt:before using boundary conditions}
    \p_tP^{(1,0)}+v_1\cdot\nabla_{x_1}P^{(1,0)}\\
    =(N_p-1)(N_p-2)\int_{\R^{12}}v_2\cdot\frac{x_2-x_3}{|x_2-x_3|}P^{(3,0)}\delta(|x_2-x_3|-2r_p)\dd  x_2\dd  x_3\dd  v_2\dd  v_3\\
+N_g(N_g-1)\int_{\R^{12}}w_1\cdot\frac{y_1-y_2}{|y_1-y_2|}P^{(1,2)}\delta(|y_1-y_2|-2r_g)\dd  y_1\dd  y_2\dd  w_1\dd  w_2\\
-(N_p-1)N_g\int_{\R^{12}}(v_2-w_1)\cdot\frac{y_1-x_2}{|y_1-x_2|}P^{(2,1)}\delta(|y_1-x_2|-r_g-r_p)\dd  y_1\dd  x_2\dd  w_1\dd  v_2\\
+(N_p-1)\int_{\R^{6}}(v_1-v_2)\cdot\frac{x_1-x_2}{|x_1-x_2|}\delta(|x_1-x_2|-2r_p)P^{(2,0)}\dd  x_2\dd  v_2\\
+N_g\int_{\R^{6}}(v_1-w_1)\cdot\frac{x_1-y_1}{|x_1-y_1|}\delta(|x_1-y_1|-r_g-r_p)P^{(1,1)}\dd  y_1\dd  w_1.
\end{multline}

Next, we simplify the expression term by term. We will show that the interaction involving three molecules ultimately contributes zero. Meanwhile, the pairwise collisions between two molecules can be reformulated as collision integrals, specifically, as either Boltzmann or Enskog collision integrals. We take the first integral in \eqref{cpt:before using boundary conditions} as an example. By the symmetry of $\bar{P}^{(N_p,N_g)}$, we can exchange the indices $2$ and $3$ and hence obtain
\begin{multline*}
    \int_{\R^{12}}v_2\cdot\frac{x_2-x_3}{|x_2-x_3|}P^{(3,0)}\delta(|x_2-x_3|-2r_p)\dd  x_2\dd  x_3\dd  v_2\dd  v_3\\
    =\int_{\R^{12}}v_3\cdot\frac{x_3-x_2}{|x_2-x_3|}P^{(3,0)}\delta(|x_2-x_3|-2r_p)\dd  x_2\dd  x_3\dd  v_2\dd  v_3\\
    =\frac{1}{2}\int_{\R^{12}}(v_2-v_3)\cdot\frac{x_2-x_3}{|x_2-x_3|}P^{(3,0)}\delta(|x_2-x_3|-2r_p)\dd  x_2\dd  x_3\dd  v_2\dd  v_3.
\end{multline*}
Next, we decompose  $(v_2 - v_3) \cdot \frac{x_2 - x_3}{|x_2 - x_3|}$ into its positive and negative parts:
\begin{multline*}
(v_2 - v_3) \cdot \frac{x_2 - x_3}{|x_2 - x_3|} = \left|(v_2 - v_3) \cdot \frac{x_2 - x_3}{|x_2 - x_3|}\right|H\left[(v_2 - v_3) \cdot (x_2 - x_3)\right] \\
- \left|(v_2 - v_3) \cdot \frac{x_2 - x_3}{|x_2 - x_3|}\right|H\left[-(v_2 - v_3) \cdot (x_2 - x_3)\right].
\end{multline*}
For the negative part, we perform the change of variables $V_{(N_p)}\mapsto R_{23}(V_{(N_p)})$ with the Jacobian given by \eqref{jacobian of pre-to-post transformation}. The boundary condition \eqref{boundary condition for g} guarantees the invariance of $P^{(3,0)}$ under the transformation. Hence, we obtain:
\begin{multline*}
\int_{\R^{12}}(v_2-v_3)\cdot\frac{x_2-x_3}{|x_2-x_3|}P^{(3,0)}\delta(|x_2-x_3|-2r_p)\dd  x_2\dd  x_3\dd  v_2\dd  v_3\\
=\int_{\R^{12}}\left|(v_2 - v_3) \cdot \frac{x_2 - x_3}{|x_2 - x_3|}\right|H\left[(v_2 - v_3) \cdot (x_2 - x_3)\right]P^{(3,0)}\delta(|x_2-x_3|-2r_p)\\
\times\dd  x_2\dd  x_3\dd  v_2\dd  v_3\\
-\int_{\R^{12}}\left|(v_2^o - v_3^o) \cdot \frac{x_2 - x_3}{|x_2 - x_3|}\right|H\left[-(v_2^o - v_3^o) \cdot (x_2 - x_3)\right]P^{(3,0)}\delta(|x_2-x_3|-2r_p)\\
\times\dd  x_2\dd  x_3\dd  v_2\dd  v_3.
\end{multline*}
By \eqref{specular reflection}, the positive and negative parts cancel each other. For the same reason, we have
\begin{multline*}
    \int_{\R^{12}}w_1\cdot\frac{y_1-y_2}{|y_1-y_2|}P^{(1,2)}\delta(|y_1-y_2|-2r_g)\dd  y_1\dd  y_2\dd  w_1\dd  w_2\\
    =\frac{1}{2} \int_{\R^{12}}(w_1-w_2)\cdot\frac{y_1-y_2}{|y_1-y_2|}P^{(1,2)}\delta(|y_1-y_2|-2r_g)\dd  y_1\dd  y_2\dd  w_1\dd  w_2=0,
\end{multline*}
and
\begin{equation*}
    \int_{\R^{12}}(v_2-w_1)\cdot\frac{y_1-x_2}{|y_1-x_2|}P^{(2,1)}\delta(|y_1-x_2|-r_g-r_p)\dd  y_1\dd  x_2\dd  w_1\dd  v_2=0.
\end{equation*}

We decompose the remaining two integrals in \eqref{cpt:before using boundary conditions} into positive and negative parts, called the gain and loss terms, respectively, as done previously:
\begin{multline*}
    \p_tP^{(1,0)}+v_1\cdot\nabla_{x_1}P^{(1,0)}\\
    =(N_p-1)\int_{\R^{6}}\left|(v_1-v_2)\cdot\frac{x_1-x_2}{|x_1-x_2|}\right|\delta(|x_1-x_2|-2r_p)\bigg[P^{(2,0)}(x_1,x_2,v_1,v_2)\\
    \times H((v_1-v_2)\cdot(x_1-x_2))
    -P^{(2,0)}(x_1,x_2,v_1,v_2)H(-(v_1-v_2)\cdot(x_1-x_2))\bigg]\dd  x_2\dd  v_2\\
+N_g\int_{\R^{6}}\left|(v_1-w_1)\cdot\frac{x_1-y_1}{|x_1-y_1|}\right|\delta(|x_1-y_1|-r_g-r_p)\bigg[P^{(1,1)}(x_1,y_1,v_1,w_1)\\
\times H((x_1-y_1)\cdot (v_1-w_1))-P^{(1,1)}(x_1,y_1,v_1,w_1)H(-(x_1-y_1)\cdot (v_1-w_1))\bigg]\dd  y_1\dd  w_1.
\end{multline*}
We then apply the boundary condition \eqref{boundary condition for g} on the outgoing hemisphere:
\begin{multline*}
    \p_tP^{(1,0)}+v_1\cdot\nabla_{x_1}P^{(1,0)}\\
    =(N_p-1)\int_{\R^{6}}\left|(v_1-v_2)\cdot\frac{x_1-x_2}{|x_1-x_2|}\right|\delta(|x_1-x_2|-2r_p)\bigg[P^{(2,0)}(x_1,x_2,v_1^o,v_2^o)\\
    \times H((v_1-v_2)\cdot(x_1-x_2))
    -P^{(2,0)}(x_1,x_2,v_1,v_2)H(-(v_1-v_2)\cdot(x_1-x_2))\bigg]\dd  x_2\dd  v_2\\
+N_g\int_{\R^{6}}\left|(v_1-w_1)\cdot\frac{x_1-y_1}{|x_1-y_1|}\right|\delta(|x_1-y_1|-r_g-r_p)\bigg[P^{(1,1)}(x_1,y_1,v_1',w_1')\\
\times H((x_1-y_1)\cdot (v_1-w_1))-P^{(1,1)}(x_1,y_1,v_1,w_1)H(-(x_1-y_1)\cdot (v_1-w_1))\bigg]\dd  y_1\dd  w_1.
\end{multline*}
We then change variables from $x_2,y_1\in\R^3$ to the surface variable on the unit sphere $n\in\sph^2$ by setting $x_2=x_1-2r_pn$ and $y_1=x_1-(r_g+r_p)n$, respectively. We note that $\delta(|x_1-x_2|-2r_p)\dd  x_2=(2r_p)^2\dd  n$ and $\delta(|x_1-y_1|-r_g-r_p)\dd  y_1=(r_g+r_p)^2\dd  n$. We then apply the change $n\mapsto -n$ on the incoming hemisphere:
\begin{multline*}
    \p_tP^{(1,0)}+v_1\cdot\nabla_{x_1}P^{(1,0)}=(N_p-1)(2r_p)^2\int_{\R^{3}\times\sph^2}\left|(v_1-v_2)\cdot n\right|H((v_1-v_2)\cdot n)\\
    \times\bigg[P^{(2,0)}(x_1,x_1-2r_pn,v_1^o,v_2^o)-P^{(2,0)}(x_1,x_1+2r_pn,v_1,v_2)\bigg]\dd  n\dd  v_2\\
+N_g(r_g+r_p)^2\int_{\R^{3}\times \sph^2}|(v_1-w_1)\cdot n| H ((v_1-w_1)\cdot n)\bigg[P^{(1,1)}(x_1,x_1-(r_g+r_p)n,v_1',w_1')\\
-P^{(1,1)}(x_1,x_1+(r_g+r_p)n,v_1,w_1)\bigg]\dd  n\dd  w_1.
\end{multline*}

In fact, $P^{(s,l)}$ represents the $(s+l)$th marginal of the distribution function. We now establish its relationship with the density distribution functions for particles and gas molecules. The particle mass density distribution function $g^{(1,0)}$ is defined as  
\begin{multline*}
    g^{(1,0)}(t,x,v):=m_p\sum_{i=1}^{N_p}\int_{\R^{6(N_g+N_p-1)}} \tilde{P}^{(N_p,N_g)}(t,x_{1},\dots,x_{i-1},x,x_{i+1},\dots,x_{N_p}, Y_{(N_g)},\\
    v_1,\dots,v_{i-1},v,v_{i+1},\dots,v_{N_p},W_{(N_g)})\\
    \dd  X_{(i-1)}\dd  X^{(i+1)}_{(N_p)}\dd  Y_{(N_g)}\dd  V_{(i-1)}\dd  V^{(i+1)}_{(N_p)}\dd  W_{(N_g)}.
\end{multline*}

We shift the indices $1, 2, \dots, i-1$ forward by one position in the sequence. Specifically, we relabel $(x_1, x_2, \dots, x_{i-1}, v_1, v_2, \dots, v_{i-1})$ as $(x_2, x_3, \dots, x_i, v_2, v_3, \dots, v_i)$, and then substitute $(x, v)$ with $(x_1, v_1)$.

\begin{multline*}
    g^{(1,0)}(t,x_1,v_1)=m_p\sum_{i=1}^{N_p}\int_{\R^{6(N_g+N_p-1)}} \tilde{P}^{(N_p,N_g)}(t,x_{2},\dots,x_{i},x_1,x_{i+1},\dots,x_{N_p}, Y_{(N_g)}\\
    ,v_2,\dots,v_{i},v_1,v_{i+1},\dots,v_{N_p},W_{(N_g)})\dd  X^{(2)}_{(N_p)}\dd  Y_{(N_g)}\dd  V^{(2)}_{(N_p)}\dd  W_{(N_g)}.
\end{multline*}
For any permutation $\tilde{\sigma}_i\in S_{N_p-1}:\{2,\dots,N_p\}\to \{2,\dots,N_p\}$ with $1\le i\le N_p$, the following identity holds:
\begin{multline*}
    \int_{\R^{6(N_g+N_p-1)}} \tilde{P}^{(N_p,N_g)}(t,x_{2},\dots,x_{i},x_1,x_{i+1},\dots,x_{N_p}, Y_{(N_g)}\\
    ,v_2,\dots,v_{i},v_1,v_{i+1},\dots,v_{N_p},W_{(N_g)})\dd  X^{(2)}_{(N_p)}\dd  Y_{(N_g)}\dd  V^{(2)}_{(N_p)}\dd  W_{(N_g)}\\
    =\int_{\R^{6(N_g+N_p-1)}} \tilde{P}^{(N_p,N_g)}(t,x_{\tilde{\sigma}_i(2)},\dots,x_{\tilde{\sigma}_i(i)},x_1,x_{\tilde{\sigma}_i(i+1)},\dots,x_{\tilde{\sigma}_i(N_p)}, Y_{(N_g)}\\
    ,v_{\tilde{\sigma}_i(2)},\dots,v_{\tilde{\sigma}_i(i)},v_1,v_{\tilde{\sigma}_i(i+1)},\dots,v_{\tilde{\sigma}_i(N_p)},W_{(N_g)})\dd  X^{(2)}_{(N_p)}\dd  Y_{(N_g)}\dd  V^{(2)}_{(N_p)}\dd  W_{(N_g)}.
\end{multline*}
Summing over $\tilde{\sigma}_i\in S_{N_p-1}$, we rewrite $g^{(1,0)}(t,x_1,v_1)$ as
\begin{multline*}
    g^{(1,0)}(t,x_1,v_1)=m_p\sum_{i=1}^{N_p}\frac{1}{(N_p-1)!}\sum_{\tilde{\sigma}_i\in S_{N_p-1}}\int_{\R^{6(N_g+N_p-1)}}
    \tilde{P}^{(N_p,N_g)}\bigg(t,x_{\tilde{\sigma}_i(2)},\dots\\
    ,x_{\tilde{\sigma}_i(i)},x_1,x_{\tilde{\sigma}_i(i+1)},\dots,x_{\tilde{\sigma}_i(N_p)}, Y_{(N_g)},v_{\tilde{\sigma}_i(2)},\dots,v_{\tilde{\sigma}_i(i)},v_1,v_{\tilde{\sigma}_i(i+1)},\dots
    ,v_{\tilde{\sigma}_i(N_p)},W_{(N_g)}\bigg)\\
    \dd  X^{(2)}_{(N_p)}\dd  Y_{(N_g)}\dd  V^{(2)}_{(N_p)}\dd  W_{(N_g)}.
\end{multline*}
Define a permutation $\sigma_i:\{1,\dots,N_p\}\to\{1,\dots,N_p\}$ that maps $i$ to $1$ by
\[
\sigma_i(k):=\left\{\begin{aligned}
&\tilde{\sigma}_i(k+1), \quad& 1\le k \le i-1;\\
&1,\quad & k=i;\\
&\tilde{\sigma}_i(k),\quad &i+1\le k\le N_p.\\
\end{aligned}\right.
\]
Then we can combine the two summations:
\begin{multline*}
g^{(1,0)}(t,x_1,v_1)=m_pN_p\frac{1}{N_p!}\sum_{\sigma\in S_{N_p}}\int_{\R^{6(N_g+N_p-1)}} \tilde{P}^{(N_p,N_g)}\bigg(t,x_{\sigma(1)},\dots,x_{\sigma(N_p)}, Y_{(N_g)}\\
,v_{\sigma(1)},\dots,v_{\sigma(N_p)},W_{(N_g)}\bigg)
\dd  X^{(2)}_{(N_p)}\dd  Y_{(N_g)}\dd  V^{(2)}_{(N_p)}\dd  W_{(N_g)}.
\end{multline*}
The definitions of the averaged particle distribution function \eqref{definition of g bar} and the marginal \eqref{definition of marginal} show that
\begin{equation}\label{definition of F}
g^{(1,0)}(t,x,v)=m_pN_pP^{(1,0)}(t,x,v).
\end{equation}
For the same reason, we have
\begin{equation}\label{definition of f}
g^{(0,1)}(t,x,w)=m_gN_gP^{(0,1)}(t,x,w).
\end{equation}

Similarly, we can establish the connection between the second marginal $P^{(1,1)}$ and the two-particle distribution function $P^{(2,0)}$ using the same idea. The definition of $P^{(2,0)}$ is given by  
\begin{multline*}
    g^{(2,0)}(t,\xi_1,\xi_2,\nu_1,\nu_2)\\
    :=m_p^2\sum_{1\le i<j\le N_p}\int_{\R^{6(N_g+N_p-2)}} \tilde{P}^{(N_p,N_g)}(t,x_{1},\dots,x_{i-1},\xi_1,x_{i+1},\dots, x_{j-1},\xi_2,x_{j+1},\dots,x_{N_p}\\
    ,Y_{(N_g)},v_1,\dots,v_{i-1},\nu_1,v_{i+1}.\dots,v_{j-1},\nu_2,v_{j+1},\dots,v_{N_p},W_{(N_g)})\\
    \dd  X_{(i-1)}\dd  X^{(i+1)}_{(j-1)}\dd  X^{(j+1)}_{(N_p)}\dd  Y_{(N_g)}\dd  V_{(i-1)}\dd  V^{(i+1)}_{(j-1)}\dd  V^{(j+1)}_{(N_p)}\dd  W_{(N_g)}.
\end{multline*}
Using a change of variables, we can symmetrize it as follows:
\begin{multline*}
    g^{(2,0)}(t,\xi_1,\xi_2,\nu_1,\nu_2)\\
    :=m_p^2\sum_{1\le i<j\le N_p}\frac{1}{(N_p-2)!}\sum_{\tilde{\sigma}_{ij}\in S_{N_p-2}}\int_{\R^{6(N_g+N_p-2)}}
    \tilde{P}^{(N_p,N_g)}\bigg(t,x_{\tilde{\sigma}_{ij}(3)},\dots\\
    ,x_{\tilde{\sigma}_{ij}(i+1)},\xi_1,,x_{\tilde{\sigma}_{ij}(i+2)},\dots,x_{\tilde{\sigma}_{ij}(j)},\xi_2,x_{\tilde{\sigma}_{ij}(j+1)},\dots,x_{\tilde{\sigma}_{ij}(N_p)}, Y_{(N_g)}\\
    ,v_{\tilde{\sigma}_{ij}(3)},\dots,v_{\tilde{\sigma}_{ij}(i+1)},\nu_1,,v_{\tilde{\sigma}_{ij}(i+2)},\dots,v_{\tilde{\sigma}_{ij}(j)},\nu_2,v_{\tilde{\sigma}_{ij}(j+1)},\dots,v_{\tilde{\sigma}_{ij}(N_p)}\bigg)\\
    \dd  X_{(i-1)}\dd  X^{(i+1)}_{(j-1)}\dd  X^{(j+1)}_{(N_p)}\dd  Y_{(N_g)}\dd  V_{(i-1)}\dd  V^{(i+1)}_{(j-1)}\dd  V^{(j+1)}_{(N_p)}\dd  W_{(N_g)}.
\end{multline*}
We extend $\tilde{\sigma}_{ij}\in S_{N_p-2}$ to $\sigma_{ij}\in S_{N_p}$ by defining 
\[
\sigma_{ij}(k):=\left\{\begin{aligned}
&\tilde{\sigma}_{ij}(k+2), \quad& 1\le k \le i-1;\\
&1,\quad & k=i;\\
&\tilde{\sigma}_{ij}(k+1), \quad& i+1\le k \le j-1;\\
&2,\quad & k=j;\\
&\tilde{\sigma}_{ij}(k),\quad &j+1\le k\le N_p.\\
\end{aligned}\right.
\]
Sum over ordered pairs $i\neq j$ so that
\[
\frac{1}{(N_p-2)!}\sum_{i\neq j}\sum_{\tilde{\sigma_{ij}}}=N_p(N_p-1)\frac{1}{N_p!}\sum_{{\sigma\in S_{N_p}}}.
\]
We can hence define $g^{(2,0)}$ by
\[
g^{(2,0)}(t,x_1,x_2,v_1,v_2) = m_p^2 N_p(N_p-1) P^{(2,0)}(t,x_1,x_2,v_1,v_2).
\]
A similar computation yields  
\begin{align*}
&g^{(1,1)}(t,x_1,y_1,v_1,w_1) = m_p m_g N_p N_g P^{(1,1)}(t,x_1,y_1,v_1,w_1),\\
&g^{(0,2)}(t,y_1,y_2,w_1,w_2) = m_g^2 N_g(N_g-1) P^{(0,2)}(t,y_1,y_2,w_1,w_2).
\end{align*}
Therefore, we can write the equation for the particle density distribution function by replacing $P^{(i,j)}$
 with $g^{(i,j)}$:
 \begin{multline*}
    \p_tg^{(1,0)}+v_1\cdot\nabla_{x_1}g^{(1,0)}=\frac{(2r_p)^2}{m_p}\int_{\R^{3}\times\sph^2}\left|(v_1-v_2)\cdot n\right|H((v_1-v_2)\cdot n)\\
    \times\bigg[g^{(2,0)}(x_1,x_1-2r_pn,v_1^o,v_2^o)-g^{(2,0)}(x_1,x_1+2r_pn,v_1,v_2)\bigg]\dd  n\dd  v_2\\
+\frac{(r_g+r_p)^2}{m_g}\int_{\R^{3}\times \sph^2}|(v_1-w_1)\cdot n| H ((v_1-w_1)\cdot n)\bigg[g^{(1,1)}(x_1,x_1-(r_g+r_p)n,v_1',w_1')\\
-g^{(1,1)}(x_1,x_1+(r_g+r_p)n,v_1,w_1)\bigg]\dd  n\dd  w_1.
\end{multline*}

It is clear that the equation remains unclosed, even when combined with the equation for the gas molecules $g^{(0,1)}$. To achieve closure, we invoke the Enskog closure on ingoing half spheres, stated as follows:
\begin{equation}\label{def:correlation functions}
g^{(0,2)} = Y^{(0,2)}[f,f]f \otimes f, \quad g^{(2,0)} = Y^{(2,0)}[F,F] F \otimes F, \quad g^{(1,1)} = Y^{(1,1)}[F,f]F \otimes f,
\end{equation}
where, for notational simplicity, $F \equiv F(t,x,v) := g^{(1,0)}(t,x,v)$ and $f \equiv f(t,x,w) := g^{(0,1)}(t,x,w)$. The functionals $Y^{(i,j)}[\varphi,\phi]\equiv Y^{(i,j)}[\varphi,\phi](t,x)$, with $i,j\in\{0,1,2\}$, account for the correction to collision probabilities due to volume exclusion effects arising from finite particle size. Furthermore, $Y^{(1,1)}[F,f]$ is a symmetric functional with respect to $f$ and $F$; that is,
\[
Y^{(1,1)}[F,f] = Y^{(1,1)}[f,F].
\]

Applying a similar argument to the gas-molecule equation and integrating \eqref{liouville equation} with respect to $\dd  X_{(N_p)}\dd  V_{(N_p)}$ and $\dd  Y^{(2)}_{(N_g)}\dd  W^{(2)}_{(N_g)}$. 
The parametrization $x_1=y_1+(r_g+r_p)n$, $y_2=y_1-2r_gn$, the boundary condition on the outgoing half-sphere, the change of variable $n\mapsto-n$ on ingoing half-sphere, the multiplication by $m_gN_g$ together with $g^{(1,1)}=m_gm_pN_gN_pP^{(1,1)}$ produce the prefactor ${(r_p+r_g)^2}/{m_p}$. 
Therefore, we obtain the equation \eqref{coupled Enskog equations} with the collision integrals \eqref{def:collision integral E_0}, \eqref{def:collision integral E_1}, \eqref{def:collision integral E_2}, and \eqref{def:collision integral E_3}.



\section{Conclusion}
We have verified the coupled Enskog equations from the underlying particle system at a formal level. Two essential features emerge that are absent in the single-species Boltzmann case:
\begin{itemize}
    \item the delocalization effects between different species;
    \item the precise coefficients appearing in front of each collision integral.
\end{itemize}
The directions of these delocalization effects are important for understanding their effects on macroscopic quantities such as pressure, density, and bulk velocity. Meanwhile, the coefficients in front of the collision integrals play a role when we nondimensionalize the coupled Enskog equations (for example, see \cite{CCG2026}).

A natural question arises: can a general collision kernel be used instead of the hard-sphere case for the coupled Enskog equations? The answer is yes, but the generalization becomes meaningless in a physical sense. For general interaction potentials, the molecular radius is irrelevant; what matters is the range of the force. Thus, the system is better viewed as a short-range potential model. For dilute gases, this has already been studied in \cite{PSS2014}. For dense gases, however, substantial difficulties appear: the collision frequency may fail to remain finite, recollisions may no longer be negligible, and key estimates (such as estimate (5.5) in \cite{PSS2014}) break down, since $\varepsilon^2 (N-j)$ is no longer less than $1$.

The rigorous derivation of such models is certainly of great interest. Yet even the rigorous derivation of the Enskog equation for single species remains an open problem at the time of this writing.

\section*{Acknowledgments}
This work forms part of the author's doctoral thesis \cite{thesisChen} at Sorbonne Université.
\appendix
\section{Proof of Proposition \ref{prop:entropy inequality for BB system}}\label{apd}
We take the derivative of the total entropy \eqref{def:entropy for BB system} of the coupled Boltzmann equations 
\begin{multline*}
    \dt S_{\mathrm{BB}}=\int_{\R^3\times \R^3}\p_tf(t,x,w)\ln f(t,x,w)\dd  x\dd  w+\dt M_g\\
    +\int_{\R^3\times \R^3}\p_tF(t,x,v)\ln F(t,x,v)\dd  x\dd  v+\dt M_\ell.
\end{multline*}
We have already shown that both species conserve their mass. Since $f$ and $F$ satisfy the equations in \eqref{coupled boltzmann equations}, it follows that
\begin{multline*}
    \dt S_{\mathrm{BB}}=\int_{\R^3\times \R^3}(-w\cdot\nabla_xf+\mathcal{B}[f]+\mathcal{B}_1[f,F])\ln f\dd  x\dd  w\\
    +\int_{\R^3\times \R^3}(-v\cdot\nabla_xF+\mathcal{B}[F]+\mathcal{B}_2[F,f])\ln F\dd  x\dd  v.
\end{multline*}
Noting that
\[
w\cdot \nabla_x f\ln f=\div_x(wf \ln f)-\div_x(wf),
\]
and the classical H-theorem for the Boltzmann collision integral gives
\[
\int_{\R^3}\mathcal{B}[f]\ln f\dd  w\le 0,
\]
we obtain
\[
\dt S_{\mathrm{BB}}\le \int_{\R^3\times \R^3}\mathcal{B}_1[f,F]\ln f\dd  x\dd  w+\int_{\R^3\times \R^3}\mathcal{B}_2[F,f]\ln F\dd  x\dd  v.
\]
Performing the pre-to-post transformation on the gain terms of the collision integrals $\mathcal{B}_1$ and $\mathcal{B}_2$, we find
\begin{multline*}
    \int_{\R^3}\mathcal{B}_1[f,F]\ln f\dd  w+\int_{\R^3}\mathcal{B}_2[F,f]\ln F\dd  v
    \\
    =\int_{\R^3\times\R^3\times \sph^2} f(w)F(v)\ln\frac{f(w')F(v')}{f(w)F(v)}b(v-w,n)\dd  n\dd  v\dd  w.
\end{multline*}
Thanks to \eqref{fundamental inequality},
we deduce
\[
\dt S_{\mathrm{BB}}\le \int_{\R^3\times\R^3\times \sph^2\times \R^3} [f(w')F(v')-f(w)F(v)]b(v-w,n)\dd  n\dd  v\dd  w\dd  x=0,
\]
where the last equality follows from the change of variables $(v,w)\mapsto (v',w')$ given by \eqref{collision formula between different species} and the invariance of the collision kernel $b$.


\bibliographystyle{alpha}
\bibliography{ref_PSEE}
\end{document}